\def\eop{\hfill\rule{2.0mm}{2.0mm}}
\def\bR{{{\mathbb R}}}
\def\geq{\geqslant}
\newcommand{\dd}{{\mathrm d}}
\newcommand{\Iso}{{\rm Iso}}
\newtheorem{proposition}{Proposition}[section]
\newtheorem{lemma}[proposition]{Lemma}
\newtheorem{theorem}[proposition]{Theorem}
\newtheorem{corollary}[proposition]{Corollary}
\theoremstyle{definition}
\theoremstyle{remark}
\newcommand*\pFqskip{8mu}
\newcommand*\pFq{\begingroup
        \catcode`\,\active
        \def ,{\mskip\pFqskip\relax}%
        \dopFq
}
\def\dopFq#1#2#3#4#5{%
        {}_{#1}F_{#2}\biggl[\genfrac..{0pt}{}{#3}{#4};#5\biggr]%
        \endgroup
}
\begin{document}
\title{\sc Isoperimetric Ratios of Toroidal Dupin Cyclides}

\date{\today}

\author{
Alin Bostan\thanks{Inria, Univ. Paris-Saclay, France. Email: \href{mailto:alin.bostan@inria.fr}{alin.bostan@inria.fr}. }
\and
 Thomas Yu\thanks{
Department of Mathematics, Drexel University. Email: \href{mailto:yut@drexel.edu}{yut@drexel.edu}. Supported in part by the National Science Foundation grants DMS 1913038.}
\and
Sergey Yurkevich\thanks{A\&R TECH, Austria. Email: \href{mailto:sergey.yurkevich@univie.ac.at}{sergey.yurkevich@univie.ac.at}. Supported in part by \href{https://oead.at/en/}{WTZ collaboration}/\href{https://www.campusfrance.org/}{Amadeus project} FR 02/2024}
}

\makeatletter \@addtoreset{equation}{section} \makeatother
\maketitle

\begin{abstract}
The combination of recent results
due to Yu and Chen [Proc. AMS 150(4), 2020, 1749--1765]
and to Bostan and Yurkevich [Proc. AMS 150(5), 2022, 2131–2136]
shows that the 3-D Euclidean shape of the square Clifford torus is uniquely determined by its isoperimetric ratio. This solves part of the still open uniqueness problem of the Canham model for biomembranes. In this work we investigate the generalization of the aforementioned result to the case of a \emph{rectangular} Clifford torus.
Like the square case, we find closed-form formulas in terms of hypergeometric functions for the isoperimetric ratio of its stereographic projection to $\mathbb{R}^3$ and show that the corresponding function is strictly increasing. But unlike the square case, we
show that the isoperimetric ratio \emph{does not} uniquely determine the Euclidean shape of a rectangular Clifford torus.
\end{abstract}

\vspace{.2in}
\noindent{\bf Keywords: } Clifford torus, M\"obius geometry, 
Isoperimetric ratio, Hypergeometric functions

\section{Introduction} \label{sec:Intro}
In differential geometry, the \emph{Willmore energy} of a given (smooth and connected) closed surface~$S$ embedded in the 3-D Euclidean space is defined as the integral, taken over~$S$, of the square of the mean curvature.
It measures to which extent such a surface deviates from a round sphere.
Willmore~\cite[Thm.~1]{Willmore65} proved that among all closed surfaces in $\mathbb{R}^3$
the Willmore energy is at least equal to $4 \pi$, and that it is minimized exactly for the round sphere.
However, for surfaces with positive genus a long-standing conjecture due to Willmore~\cite{Willmore65} predicts a better lower bound, namely $2\pi^2$.
In 2014 Marques and Neves \cite{MarquesNeves:Willmore} proved Willmore's conjecture, by showing that among all closed surfaces in $\mathbb{R}^3$ of genus $g \geq 1$ the Willmore energy is minimized exactly for the class of projections to~$\mathbb{R}^3$ of the \emph{square Clifford torus} $C_{\pi/4}$ defined by
\[
    \mathcal{C}_{\pi/4} = \big\{[\cos u, \sin u, \cos v, \sin v] / \sqrt{2}: u,v \in [0,2\pi] \big\} \subseteq \mathbb{S}^3 \subseteq \mathbb{R}^{4}.
\]
Closely connected is the classical \emph{Canham model for biomembranes}~\cite{Canham70} which asks for a minimizer of the Willmore energy for a closed surface $S$ in $\mathbb{R}^3$ with given genus, surface area and volume. By the scale invariance of Willmore energy, 
prescribing the surface area and volume of $S$ is equivalent to fixing the surface's \emph{isoperimetric ratio}, defined as
\begin{eqnarray} \label{eq:IsoDef}
v(S) \coloneqq 6 \sqrt{\pi} \cdot \frac{{\rm Vol}(S)}{{\rm Area}(S)^{3/2}},
\end{eqnarray}
which is normalized in such a way that for the round sphere it is 
equal to~1.

Motivated by the open \emph{uniqueness problem} of Canham's model (discussed in \cite{YuChen:Uniqueness,KusnerMondinoSchulze:MSRI})
the following result was established as a combination of several works by Chen and the authors of the present paper~\cite{YuChen:UniquenessAMS,BoYu22}, as well as Melczer and Mezzarobba~\cite{MelczerMezzarobba2022}:
\begin{theorem} \label{thm:MainYuChen}
The 3-D Euclidean shape
of $C_{\pi/4}$ is uniquely determined by its isoperimetric ratio.
\end{theorem}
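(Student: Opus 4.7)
The plan is to reduce the uniqueness claim to the strict monotonicity of a single real-valued function on an interval. First, I would exploit the fact that the M\"obius group of $\mathbb{S}^3$ acts on the Clifford torus $\mathcal{C}_{\pi/4}$, and its orbit in $\mathbb{R}^3$ (under stereographic projection, modulo Euclidean similarity) is a one-parameter family of Dupin cyclides of toroidal type. Let $r \in (0,1)$ be a natural parameter for this family --- for instance the ratio of the two focal radii of the cyclide, or equivalently the modulus of an associated elliptic curve. Every Euclidean shape of a stereographic projection of $\mathcal{C}_{\pi/4}$ corresponds to exactly one value of $r$, so the task becomes: show that the isoperimetric ratio $v(r) \coloneqq v(S_r)$, where $S_r$ denotes the cyclide with parameter $r$, is strictly monotonic on $(0,1)$.

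Next I would compute $\mathrm{Area}(S_r)$ and $\mathrm{Vol}(S_r)$ in closed form. Parametrizing the cyclide explicitly in $\mathbb{R}^3$ and integrating reduces the area and the volume to complete elliptic integrals, which are ${}_2F_1$ hypergeometric functions in a simple algebraic argument built from $r$. Plugging these into formula~\eqref{eq:IsoDef} yields $v(r)$ as an algebraic combination of such hypergeometric values. One checks the boundary behaviour: as $r \to 0$ the cyclide degenerates to a thin shape and $v(r) \to 0$, while at the square-Clifford value $v(r)$ attains the critical value predicted by the Willmore conjecture.

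The main obstacle is the strict monotonicity of $v(r)$ on $(0,1)$. Differentiating produces a numerator given by a specific linear combination of hypergeometric functions whose sign is not obvious. I would attack this in two layers, following the strategy implicit in \cite{YuChen:UniquenessAMS,BoYu22,MelczerMezzarobba2022}: first, reduce the sign problem, as Yu--Chen do, to an inequality between complete elliptic integrals of the first and second kind; second, prove the inequality rigorously by the symbolic-algorithmic approach of Bostan--Yurkevich, namely realize the derivative of $v(r)$ as a D-finite function, apply creative telescoping to obtain an explicit linear ODE that it satisfies, and exploit this ODE together with analysis at the endpoints. Where the analytic argument leaves a bounded sub-interval uncovered, rigorous numerical enclosures (as in the Melczer--Mezzarobba certification) fill the gap.

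Once strict monotonicity is established, $r \mapsto v(r)$ is a bijection of $(0,1)$ onto its image, so the isoperimetric ratio recovers $r$; this in turn recovers the Euclidean shape of the stereographic projection of $\mathcal{C}_{\pi/4}$, completing the proof. I expect the delicate hypergeometric inequality to be by far the hardest part, whereas the parametrization, the closed-form reduction, and the final bijection step are essentially bookkeeping.
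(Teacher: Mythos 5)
Your overall skeleton is the same as the paper's: parametrize the similarity classes of stereographic images of $\mathcal{C}_{\pi/4}$ by one real parameter, express area and volume hypergeometrically, and reduce uniqueness to strict monotonicity of the isoperimetric ratio (the monotonicity itself being the content of \cref{thm:MeMeBoYu}, proved in \cite{BoYu22,MelczerMezzarobba2022} by exactly the D-finite/creative-telescoping techniques you invoke). However, two points in your write-up are not right as stated. First, the boundary behaviour is wrong: on this family the isoperimetric ratio ranges from $3/(2\sqrt{\pi\sqrt{2}})\approx 0.71$ (attained by the torus of revolution $T_{\sqrt{2}}$ itself, i.e.\ inversion centers on the symmetry axis) up to $1$ (approaching the round sphere); it never tends to $0$, since no projection of $\mathcal{C}_{\pi/4}$ degenerates to a ``thin'' cyclide --- shapes with small isoperimetric ratio come from $C_\alpha$ with $\alpha$ near $0$, a different conformal class. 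Likewise, ``attains the critical value predicted by the Willmore conjecture'' conflates the Willmore energy (which equals $2\pi^2$ for \emph{every} member of the family) with the isoperimetric ratio. These errors do not break the monotonicity-based logic, but they suggest you may be picturing the wrong one-parameter family.

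Second, and more substantively, the step you dismiss as bookkeeping --- ``every Euclidean shape of a stereographic projection of $\mathcal{C}_{\pi/4}$ corresponds to exactly one value of $r$'' --- is the geometric heart of the theorem, not a formality. In the paper this is \cref{thm:Thm1}, \cref{thm:Thm2}, \cref{thm:Thm3} and \cref{Cor}: one must show (via Maxwell's ellipse--hyperbola characterization and the resulting $P_1$/Maxwell ratios) that all inversion centers on a torus $\mathcal{T}(\varrho;R)$ give homothetic cyclides, that distinct $\varrho\in[0,R-1)$ give genuinely distinct shapes, and that the duality $\varrho\mapsto\varrho'$ of \eqref{eq:rhoDuality} folds the remaining parameter range $(R-1,\sqrt{R^2-1}]$ back onto $[0,R'-1)$. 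It is precisely the self-duality $R'=R$ at $R=\sqrt{2}$ that makes the interval $[0,\sqrt{2}-1)$ exhaust the orbit exactly once, so that monotonicity implies uniqueness; for every other $R$ the same monotonicity statement (\cref{thm:Main}) holds and yet uniqueness \emph{fails} (\cref{thm:Main2}), because the orbit consists of two dual branches whose isoperimetric-ratio ranges overlap. A proof that treats the shape classification as automatic therefore cannot explain why the argument works only for the square Clifford torus, and is incomplete without supplying that classification (or citing \cite{YuChen:UniquenessAMS}, where it is carried out).
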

By ``3-D Euclidean shapes" we refer to the different
stereographic projections of $C_{\pi/4}$ into $\bR^3$ for which the center of projection does not lie on the torus.
Two subsets of $\bR^3$ are considered to have the same Euclidean shape (or simply the same \emph{shape}) if they can be transformed
from one to another by a similarity transformation; in the latter, we call two such subsets \emph{homothetic}.
We refer to \cite[Figure 1]{YuChen:UniquenessAMS} for a depiction of various Euclidean shapes of the square Clifford torus.

The stereographic images of ${C}_{\pi/4}$ in $\mathbb{S}^3$ are homothetic to the sphere inversions of the torus of revolution~$T_{\sqrt{2}}$ with major radius $\sqrt{2}$ and minor radius $1$.
More generally, the stereographic images of the
\emph{rectangular Clifford torus}
\begin{eqnarray} \label{eq:CliffordTorusS3}
C_\alpha = \Big\{\big[\cos\alpha \cos u, \cos\alpha \sin u, \sin\alpha \cos v, \sin\alpha \sin v\big]: u,v \in [0,2\pi] \Big\}, \;\; \alpha \in (0,\pi/2),
\end{eqnarray}
are homothetic to  the sphere inversions of $T_{\csc \alpha}$, where 
\[
    T_R \coloneqq \big\{\big[ \big(R + \sin v \big)\cos u, \big( R + \sin v \big)\sin u, \; \cos v \big]: u,v \in [0,2\pi] \big\}
\]
is the torus of revolution with major radius $R$ and minor radius $1$.

Denote by $i_\mathbf{x}$ the sphere inversion map about a
unit sphere centered at $\mathbf{x} \in \bR^3$. If $R \in (1,\infty)$ and $\mathbf{x} \notin T_R$, then
$i_\mathbf{x}(T_R)$ is called a
\emph{toroidal Dupin cyclide}. Theorem~\ref{thm:MainYuChen} is equivalent to saying that the Euclidean shapes of
$\{i_\mathbf{x}(T_{\sqrt{2}}): \mathbf{x} \in \bR^3 \backslash T_{\sqrt{2}}\}$
are in one-to-one correspondence with their isoperimetric ratios (\ref{eq:IsoDef}).
Note that for notational convenience, if ${\bf x} \in T_R$, we define $i_{\bf x}(T_R)$ to be an arbitrary round sphere.

The proof of \cref{thm:MainYuChen} crucially relies on the following fact:
\begin{theorem} \label{thm:MeMeBoYu}
The isoperimetric ratio of $i_{[\varrho,0,0]}(T_{\sqrt{2}})$ increases monotonically for $\varrho \in [0, \sqrt{2}-1)$.
\end{theorem}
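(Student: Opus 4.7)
The plan is to derive closed-form expressions for
\[
A(\varrho) := \mathrm{Area}\bigl(i_{[\varrho,0,0]}(T_{\sqrt{2}})\bigr), \qquad V(\varrho) := \mathrm{Vol}\bigl(i_{[\varrho,0,0]}(T_{\sqrt{2}})\bigr)
\]
as hypergeometric functions of $\varrho$, and then to show strict positivity of $(V^2/A^3)' = V(2AV'-3VA')/A^4$ on $(0,\sqrt{2}-1)$. First, parametrize $T_{\sqrt 2}$ by $\mathbf{y}(u,v) = ((\sqrt{2}+\sin v)\cos u,(\sqrt{2}+\sin v)\sin u,\cos v)$. The classical conformal distortion law gives $dA(i_{\mathbf{x}}(S)) = |\mathbf{y}-\mathbf{x}|^{-4}\,dA(S)$, while the enclosed volume can be written via the divergence theorem as a surface integral involving the cube of the inversion distance. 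Pulling both quantities back to the parameter square $[0,2\pi]^2$ yields
\[
A(\varrho) = \int_0^{2\pi}\!\!\int_0^{2\pi}\frac{(\sqrt{2}+\sin v)\,du\,dv}{D(u,v;\varrho)^2}, \qquad V(\varrho) = \int_0^{2\pi}\!\!\int_0^{2\pi}\frac{N(u,v;\varrho)\,du\,dv}{D(u,v;\varrho)^3},
\]
where $D(u,v;\varrho) = (\sqrt{2}+\sin v)^2 - 2\varrho(\sqrt{2}+\sin v)\cos u + \varrho^2 + \cos^2 v$ is affine in $\cos u$ and $N$ is an explicit polynomial in $\sin v$, $\cos v$ and $\cos u$.

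Since $D$ is affine in $\cos u$, the inner $u$-integrals of $D^{-2}$ and $D^{-3}$ evaluate, by residues or standard half-angle reductions, to rational functions of $\sqrt{\Delta(v;\varrho)}$ for an explicit discriminant $\Delta$. What remains is a single integral in $v$, which can be expanded as a power series in $\varrho$ and integrated termwise using $\int_0^{2\pi}\sin^j v\cos^k v\,dv$. After resummation, this produces closed forms for $A(\varrho)$ and $V(\varrho)$ as finite $\mathbb{Q}[\varrho]$-linear combinations of Gaussian hypergeometric functions $_2F_1$ evaluated at an algebraic function of $\varrho^2$, analytic on $(-(\sqrt{2}-1),\sqrt{2}-1)$. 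Setting $F(\varrho) := 2A(\varrho)V'(\varrho)-3V(\varrho)A'(\varrho)$, one would then derive by creative telescoping a linear ODE with polynomial coefficients annihilating $F$, and verify positivity of $F$ by combining the computation of finitely many initial Taylor coefficients at $\varrho=0$ with a sign argument on the associated recurrence, concluding that all Taylor coefficients of $F$ in $\varrho^2$ are nonnegative and strictly positive beyond the order of vanishing at $0$.

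The main obstacle is the certified positivity of $F$. The recurrence produced by creative telescoping will in general have coefficients of mixed sign, so term-by-term positivity need not propagate automatically; a successful strategy may require rewriting $F$ as a sum of manifestly positive hypergeometric pieces (e.g.\ by finding a representation with a nonnegative integrand on $(0,\sqrt{2}-1)$), in the spirit of \cite{BoYu22,MelczerMezzarobba2022}. A secondary subtlety is the boundary $\varrho = \sqrt{2}-1$, where the inversion center hits the inner equator of $T_{\sqrt{2}}$ and the closed forms develop an algebraic singularity: one must track the leading asymptotics of $A$, $V$ and their derivatives there to confirm that monotonicity persists strictly throughout the open interval.
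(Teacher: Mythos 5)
Your setup (the conformal-factor integrals for $A$ and $V$, reduction to hypergeometric closed forms, analyticity on $|\varrho|<\sqrt2-1$) matches the paper's first stage, which for $R=\sqrt2$ recovers the known ${}_2F_1$ representations (here Propositions~\ref{prop:AR} and~\ref{prop:VR} specialize, since by \eqref{eq:3F2to2F1} the ${}_3F_2$ collapses to a ${}_2F_1$ when $R^2=2$). But the heart of the theorem is the certified positivity of the derivative of the isoperimetric ratio, and this is exactly the step your proposal does not carry out: you write that one ``would'' obtain an ODE/recurrence for $F=2AV'-3VA'$ and ``verify positivity \ldots{} by a sign argument,'' and then immediately concede that the recurrence has mixed-sign coefficients so positivity need not propagate, and that ``a successful strategy may require'' a rewriting as manifestly positive pieces. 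That concession is accurate --- a naive claim that all Taylor coefficients of $F$ in $\varrho^2$ are nonnegative is not established by anything in your argument, and making the recurrence route rigorous (the Melczer--Mezzarobba path) requires a genuine asymptotic/sign analysis that is absent here. So the proposal is an outline with the decisive idea missing, not a proof.

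For comparison, the paper closes this gap not by attacking $2AV'-3VA'$ directly but by a change of variable $x=4z^2/(R^2-1-z^2)^2$ followed by a multiplicative splitting: the relevant quantity is $h_R=f^3\,g_R^2$ with
\[
f(x)=\frac{(1+x)^{1/2}}{{}_2F_1\!\left(-\tfrac12,-\tfrac12;1;x\right)},\qquad
g_R(x)=\frac{{}_3F_2\text{-factor}}{(1+x)^{3/4}\,(1+(R^2-1)x)^{3/4}},
\]
each positive and increasing on $(0,1)$: monotonicity of $f$ is \cite[Prop.~1]{BoYu22}, and monotonicity of $g_R$ is proved by showing that the Taylor coefficients $u_n(R)$ of a suitably normalized $g_R'$ form a \emph{hypergeometric} sequence with an explicit closed-form term ratio, so that positivity of all $u_n(R)$ reduces to positivity of an explicit cubic-in-$n$ polynomial $p_n(R)$, checked by elementary estimates. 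It is this factorization into pieces whose series coefficients are provably positive (rather than a direct coefficient-sign analysis of $V^2/A^3$ or of $2AV'-3VA'$) that makes the monotonicity argument go through; your proposal identifies the need for such a device but does not supply it. A minor additional remark: since the interval in the statement is half-open, no boundary analysis at $\varrho=\sqrt2-1$ is needed beyond the limit value $1$ of the ratio, so your ``secondary subtlety'' is not actually an obstruction.
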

This statement was conjectured in \cite{YuChen:UniquenessAMS} and independently proved in \cite{BoYu22} and \cite{MelczerMezzarobba2022}. In \cite{BoYu22} \cref{thm:MeMeBoYu} is proved based on hypergeometric representations of the area and volume of $i_{[\varrho,0,0]}(T_{\sqrt{2}})$.
These hypergeometric representations could be found using P-recursive sequences given in \cite[Proposition 4.1]{YuChen:UniquenessAMS}.
In \cite{MelczerMezzarobba2022}, the result is proved with rigorous asymptotic analysis of another but related P-recurrence associated with the derivative of the isoperimetric ratio of $i_{[\varrho,0,0]}(T_{\sqrt{2}})$ with respect to $\varrho$ (also derived in \cite[Prop.~4.1]{YuChen:UniquenessAMS}).

\bigskip

Having established the case of the \emph{square Clifford torus} $C_{\pi/4}$ 
in \cref{thm:MainYuChen} and \cref{thm:MeMeBoYu}, in this work we investigate the case of a general \emph{rectangular Clifford torus} $C_\alpha$ for $\alpha \in (0, \pi/2)$. 
Along the way, we streamline the analysis in the case of $C_{\pi/4}$ and also
prove the following curious fact:
\begin{proposition} \label{thm:Main2}
    Theorem~\ref{thm:MainYuChen} does \textbf{not} hold for $C_{\alpha}$ with $\alpha \neq \pi/4$.
\end{proposition}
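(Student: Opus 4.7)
The plan is, for each $\alpha \in (0,\pi/2)\setminus\{\pi/4\}$, to exhibit two stereographic projections of $C_\alpha$ whose images in $\mathbb{R}^3$ share the same isoperimetric ratio but are not Euclidean-similar. Parameterize centers of stereographic projection on $S^3$ modulo the $O(2)\times O(2)$-action (rotating each $\mathbb{C}$-factor of $\mathbb{R}^4 \cong \mathbb{C}^2$) by $\theta \in [0,\pi/2]$, taking $\mathbf{p}(\theta) = (\cos\theta, 0, \sin\theta, 0)$. Every stereographic image of $C_\alpha$ is Euclidean-similar to the one from some $\mathbf{p}(\theta)$, yielding a real-valued function $v_\alpha(\theta)$ on $[0,\pi/2]\setminus\{\alpha\}$ (the parameter $\theta=\alpha$ is excluded since $\mathbf{p}(\alpha)\in C_\alpha$). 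A direct verification shows that two centers $\mathbf{p}(\theta_1),\mathbf{p}(\theta_2)$ produce Euclidean-similar images iff they lie in a common orbit of the Möbius stabilizer $G_\alpha$ of $C_\alpha$ in $S^3$, so it suffices to find distinct $\theta_1,\theta_2$ in different $G_\alpha$-orbits with $v_\alpha(\theta_1) = v_\alpha(\theta_2)$.

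The pivotal asymmetry is the following. When $\alpha=\pi/4$, the involution $\tau:(z,w)\mapsto(w,z)$ on $\mathbb{C}^2$ preserves $C_{\pi/4}$ and sends $\mathbf{p}(\theta)$ to $\mathbf{p}(\pi/2-\theta)$; so $v_{\pi/4}(\theta)=v_{\pi/4}(\pi/2-\theta)$, the halves $[0,\pi/4)$ and $(\pi/4,\pi/2]$ become identified in moduli, and the shape space collapses to a single interval on which $v_{\pi/4}$ is strictly monotonic (thereby recovering \cref{thm:MainYuChen}). When $\alpha\neq\pi/4$, the map $\tau$ does not preserve $C_\alpha$ since $\tau(C_\alpha)=C_{\pi/2-\alpha}\neq C_\alpha$; furthermore the identity component of $G_\alpha$ is exactly $O(2)\times O(2)$ (acting trivially on the $\theta$-parameter) and no further discrete symmetry permutes $\theta$. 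Hence any two distinct $\theta_1,\theta_2 \in [0,\pi/2]\setminus\{\alpha\}$ represent different Euclidean shapes.

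Running the hypergeometric and contour-integral machinery of \cite{BoYu22} separately on each of the two intervals $[0,\alpha)$ and $(\alpha,\pi/2]$ yields closed-form expressions for $v_\alpha$; the same monotonicity argument that handles the square case then gives strict monotonicity of $v_\alpha$ on each interval. The one-sided limits $\lim_{\theta\to\alpha^\pm}v_\alpha(\theta)$ should coincide at a common value $v^*$, because as $\mathbf{p}(\theta)$ approaches $\mathbf{p}(\alpha)\in C_\alpha$ from either side the degenerating cyclides become Möbius-equivalent noncompact surfaces with a common asymptotic isoperimetric ratio. Combined with monotonicity, this forces the two range-intervals of $v_\alpha$ (one per half) to share an open neighborhood of $v^*$, giving a nonempty open set of isoperimetric-ratio values each realized at exactly two $\theta$-parameters in distinct $G_\alpha$-orbits—and hence at non-homothetic cyclides.

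The main technical obstacles are: (i) verifying that $G_\alpha$ for $\alpha\neq\pi/4$ admits no Möbius symmetries beyond $O(2)\times O(2)$, a finite classification problem for Möbius stabilizers of flat tori in $S^3$ that should be tractable by a direct Lie-algebraic computation; and (ii) establishing the common one-sided limit $v^*$ and the strict monotonicity of $v_\alpha$ on the second interval $(\alpha,\pi/2]$, which should follow from the same creative-telescoping and hypergeometric techniques that handle $[0,\alpha)$, perhaps via an explicit Möbius change of variables swapping the two halves. The second obstacle is the more computational one and likely the analytic heart of the argument; neither obstruction appears genuinely new once the methodology of \cite{BoYu22} is in place.
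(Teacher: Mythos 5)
Your overall architecture matches the paper's: reduce to a one-parameter family of shapes (your $\theta$, the paper's $\varrho$ with $\mathbf{p}(\theta)$-orbits corresponding to the tori $\mathcal{T}(\varrho;R)$ of \cref{thm:Thm1}), observe two branches separated by the degenerate parameter, and find a value of the isoperimetric ratio attained on both branches by non-homothetic cyclides. But the two load-bearing steps are exactly the ones you defer or argue heuristically, so as written there is a genuine gap. First, the distinctness of shapes across (and within) the two branches for $\alpha\neq\pi/4$ is precisely the hard content of the proposition; you assert it via ``the Möbius stabilizer $G_\alpha$ is $O(2)\times O(2)$ with no extra discrete symmetry'' and file it under a tractable classification. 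Note in particular that $G_\alpha$ is \emph{not} just the isometric stabilizer: e.g.\ in the $T_R$ picture the inversion in the sphere of radius $\sqrt{R^2-1}$ centered on the axis preserves $T_R$, so your stabilizer claim needs a real argument about which conformal symmetries can move the $\theta$-parameter. The paper instead proves distinctness explicitly (\cref{thm:Thm3}, via the $P_1$/Maxwell-ratio computation of \cref{lemma:P1ratios} and injectivity of the map $\phi$), and this is where the actual work lies; your proposal leaves it unproven.

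Second, your overlap argument is logically incomplete: a common one-sided limit $v^*$ at $\theta=\alpha$ plus monotonicity on each half does \emph{not} force the two ranges to ``share an open neighborhood of $v^*$'' --- if one branch approached $v^*$ from below and the other from above, the ranges could meet in at most the single unattained value $v^*$. The argument only closes once you identify $v^*=1$ (so both branches approach it from below, by the isoperimetric inequality), and you neither identify $v^*$ nor prove the limit; your justification via ``Möbius-equivalent noncompact limits'' is heuristic. The paper avoids both of your obstacles at once with the duality of \cref{thm:Thm2}: the second branch $\varrho\in(R-1,\sqrt{R^2-1}]$ of $T_R$ \emph{is} the first branch $[0,R'-1)$ of $T_{R'}$ with $R'=R/\sqrt{R^2-1}$, so \cref{thm:Main} applied to $R$ and to $R'$ gives monotone ranges $[3/(2\sqrt{\pi R}),1)$ and $[3/(2\sqrt{\pi R'}),1)$ with no new hypergeometric computation on the second branch, the common limit $1$ for free, and distinctness across branches from \cref{thm:Thm3} because $R'\neq R$ exactly when $R\neq\sqrt2$ (see \cref{Cor}). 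I recommend replacing your stabilizer classification and second-branch recomputation by this duality-plus-injectivity route, or else actually carrying out the stabilizer analysis and the limit computation, without which the proposal does not yet prove the proposition.
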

Phrased differently, this theorem says that for any $R > 1$ such that $R \neq \sqrt{2}$, the isoperimetric ratio of $i_{\bf x}(T_R)$ does \emph{not} uniquely determine its Euclidean shape.
While \cref{thm:MainYuChen} does not generalize to a general $\alpha$, \cref{thm:MeMeBoYu} does generalize to a general $R>1$: 

\begin{theorem} \label{thm:Main}
For any $R \in (1,\infty)$, when $\varrho$ increases from $0$ to $R-1$,
the isoperimetric ratio of $i_{[\varrho,0,0]}(T_R)$ increases monotonically from $3/(2\sqrt{\pi R})$ to $1$. 
\end{theorem}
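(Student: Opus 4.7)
The plan is to extend, with $R$ treated as a free parameter, the hypergeometric strategy used for $R=\sqrt{2}$ in \cite{BoYu22}. Parametrize $T_R$ by $\sigma(u,v)=((R+\sin v)\cos u,(R+\sin v)\sin u,\cos v)$; then the area element is $(R+\sin v)\,du\,dv$ and
\[
  |\sigma(u,v)-[\varrho,0,0]|^{2} = R^{2}+2R\sin v+1-2\varrho(R+\sin v)\cos u+\varrho^{2}.
\]
Since the inversion $i_{[\varrho,0,0]}$ is conformal with pointwise scale $|\sigma-[\varrho,0,0]|^{-2}$,
\[
  A(\varrho;R)=\int_{0}^{2\pi}\!\!\int_0^{2\pi}\frac{(R+\sin v)\,du\,dv}{\bigl(R^{2}+2R\sin v+1-2\varrho(R+\sin v)\cos u+\varrho^{2}\bigr)^{2}},
\]
and applying the divergence theorem to the inverted surface (computing $\widetilde{\sigma}_u\times\widetilde{\sigma}_v$ in terms of $\sigma$) yields an analogous rational trigonometric integral for the enclosed volume $V(\varrho;R)$.

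Next, the inner $u$-integrals are rational in $\cos u$ and evaluate in closed form by the residue theorem, reproducing the single-variable integral representations of \cite[Prop.~4.1]{YuChen:UniquenessAMS} when $R=\sqrt{2}$. Expanding the resulting $v$-integrands in powers of $\varrho$ and integrating term-by-term gives convergent expansions $A(\varrho;R)=\sum_{k\ge 0}a_k(R)\,\varrho^{2k}$ and $V(\varrho;R)=\sum_{k\ge 0}b_k(R)\,\varrho^{2k}$, with coefficients $a_k(R),b_k(R)$ that are P-recursive in $k$. Creative telescoping then produces linear ODEs in $\varrho$ with $R$-dependent rational coefficients for both $A$ and $V$, together with hypergeometric-type closed forms that specialize at $R=\sqrt{2}$ to those of \cite{BoYu22}.

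With these representations in hand, the monotonicity claim $\tfrac{d}{d\varrho}v(\varrho;R)>0$ on $(0,R-1)$ reduces to the positivity on that interval of the single D-finite function
\[
  D(\varrho;R)\;:=\;2\,V'(\varrho;R)\,A(\varrho;R)-3\,V(\varrho;R)\,A'(\varrho;R).
\]
We would attack this by combining (i) term-by-term nonnegativity of the $\varrho$-expansion of $D$, in the spirit of \cite{BoYu22}, with (ii) rigorous asymptotic bounds for $D$ derived from its P-recursion as $\varrho\to(R-1)^{-}$, in the spirit of \cite{MelczerMezzarobba2022}. The boundary values are then straightforward: the identities $A(0;R)=4\pi^{2}R$ and $V(0;R)=2\pi^{2}R$ give $v(0;R)=3/(2\sqrt{\pi R})$ by direct substitution, while as $\varrho\to(R-1)^{-}$ the inversion magnifies the neighborhood of the closest point $[R-1,0,0]\in T_R$ into a large sphere with a vanishingly small dimple coming from the rest of the torus, so the cyclide approaches a round sphere in shape and $v\to 1$.

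The main obstacle is the uniformity in $R$. For $R=\sqrt{2}$ the hypergeometric sums collapse to a one-parameter family amenable to standard D-finite positivity techniques, whereas here the coefficients $a_k(R),b_k(R)$ yield a genuine two-variable positivity problem whose certificate must be valid for every $R\in(1,\infty)$. Overcoming this will likely require either a clever factorization of $D(\varrho;R)$, or a carefully structured combination of symbolic positivity arguments with $R$-uniform rigorous numerics on a compact subinterval.
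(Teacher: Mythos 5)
Your setup (conformal factor, trigonometric integrals for $A$ and $V$, creative telescoping leading to $R$-dependent hypergeometric closed forms) matches the paper's route through its Propositions~\ref{prop:AR} and \ref{prop:VR}, and your boundary values are right. But the heart of the theorem --- monotonicity uniformly in $R\in(1,\infty)$ --- is not actually proved in your proposal: you reduce it to the positivity of $D(\varrho;R)=2V'A-3VA'$ and then only describe how you ``would attack'' it, explicitly conceding that the two-variable positivity problem (a certificate valid for every $R$) is the unresolved obstacle. Neither of your suggested devices is carried out: term-by-term nonnegativity of the $\varrho$-expansion of $D$ is not established (and is not known to hold in a form uniform in $R$), and an $R$-uniform adaptation of the Melczer--Mezzarobba asymptotic method, or ``rigorous numerics on a compact subinterval,'' cannot cover the noncompact parameter range $(1,\infty)$ without substantial new input. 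So as written this is a plan with the decisive step missing, not a proof.

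For comparison, the paper closes exactly this gap by a structural factorization rather than by attacking $D$ directly: after the substitution $x=4z^2/(R^2-1-z^2)^2$, the squared isoperimetric ratio becomes, up to an explicit increasing prefactor, $h_R=f^3\,g_R^2$, where $f(x)=(1+x)^{1/2}/{}_2F_1(-\tfrac12,-\tfrac12;1;x)$ is the function already shown to be increasing in \cite{BoYu22}, and $g_R$ is a ${}_3F_2$ divided by $(1+x)^{3/4}(1+(R^2-1)x)^{3/4}$. The increase of $g_R$ is then proved by showing that a suitably normalized series expansion of $g_R'$ has coefficients $u_n(R)$ forming a hypergeometric sequence with an explicit ratio $u_{n+1}/u_n$, which reduces everything to the positivity of explicit cubic polynomials $p_n(R)$ in $n$ and $R$; this is settled by an elementary estimate valid for all $n\geq 1$ and $R>1$. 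That factorization (together with the contiguity identity \eqref{eq:3F2to2F1} relating the ${}_3F_2$ to ${}_2F_1$'s) is precisely the ``clever factorization'' your last paragraph hopes for; without it, or an equivalent uniform-in-$R$ positivity certificate, your argument does not reach the conclusion.
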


The proof of \cref{thm:Main} appeared in a slightly modified version in the PhD thesis~\cite[\S4.3]{Yurkevich23} of the third author and is reproduced with some adaptations in this paper. Similarly to the approach in \cite{BoYu22} for the case $R=\sqrt{2}$, the proof technique presented here is based on hypergeometric representations of the area and volume of $i_{[\varrho,0,0]}(T_R)$.
Like in \cite[\S4.3]{Yurkevich23}, but contrary to \cite{YuChen:UniquenessAMS,BoYu22}, we explain here in more details 
how the hypergeometric representations could be found and proved using the technique of creative telescoping.  Moreover, it turns out that when $R\neq \sqrt{2}$ the representations involve the generalized hypergeometric function ${}_3F_2$, instead of just the Gaussian hypergeometric function ${}_2F_1$ as seen in \cite{YuChen:UniquenessAMS}. Finally, as evident in \cref{sec:IsoR} the proof of monotonicity becomes more involved in the $R\neq \sqrt{2}$ case.

\bigskip

\noindent
\textbf{Structure of the paper:} In \cref{sec:dupin}, we state three 
properties of the sphere inversions of $T_R$. From these properties, we also see
\begin{itemize}
\item[(i)] how \cref{thm:MainYuChen} follows from \cref{thm:MeMeBoYu}, whereas
\item[(ii)] the natural generalization of \cref{thm:MeMeBoYu}, namely, \cref{thm:Main} does \emph{not} lead to a generalization of \cref{thm:MainYuChen}, hence proving \cref{thm:Main2}.
\end{itemize}
The first of the aforementioned properties is proven in \cite[Section 2]{YuChen:UniquenessAMS}; the latter two will be proved in \cref{sec:Theorem2and3}.
Finally, \cref{sec:hypergeom} contains the proof of our main \cref{thm:Main}.

\section{Properties of Toroidal Dupin cyclides and \cref{thm:Main2}} \label{sec:dupin}
In this section, we first state three results, \cref{thm:Thm1}-\ref{thm:Thm3}, about toroidal Dupin cyclides. \cref{thm:Thm1} is proved in details
in \cite[Section 2]{YuChen:UniquenessAMS}, however Theorem~\ref{thm:Thm2} and \ref{thm:Thm3} are stated
without proof in \cite[Section 2]{YuChen:UniquenessAMS}. We shall fill in the proofs of the latter two results in \cref{sec:Theorem2and3}.

Denote by $\mathcal{C}(\varrho; R)$ the circle in the $\rho$-$z$ plane with
a diameter connecting $(\varrho,0)$ and $\left( (R^2-1)/\varrho,0 \right)$ (see \cref{fig:Circles}),
and let
\[
\mathcal{T}(\varrho; R) \coloneqq
\left\{ (\rho \cos(\theta), \rho \sin(\theta), z): (\rho,z) \in \mathcal{C}(\varrho; R), \; \theta \in [0,2\pi] \right\}
\]
be the torus
defined by revolving $\mathcal{C}(\varrho; R)$ about the $z$-axis.
By convention, $\mathcal{C}(0; R) = \mathcal{C}(\infty; R)$ is the $z$-axis (of the $\rho$-$z$ plane), so $\mathcal{T}(0; R) = \mathcal{T}(\infty; R)$
is the $z$-axis of the $x$-$y$-$z$ space.

\begin{figure}[ht]
\centerline{
\includegraphics[height=2.3in]{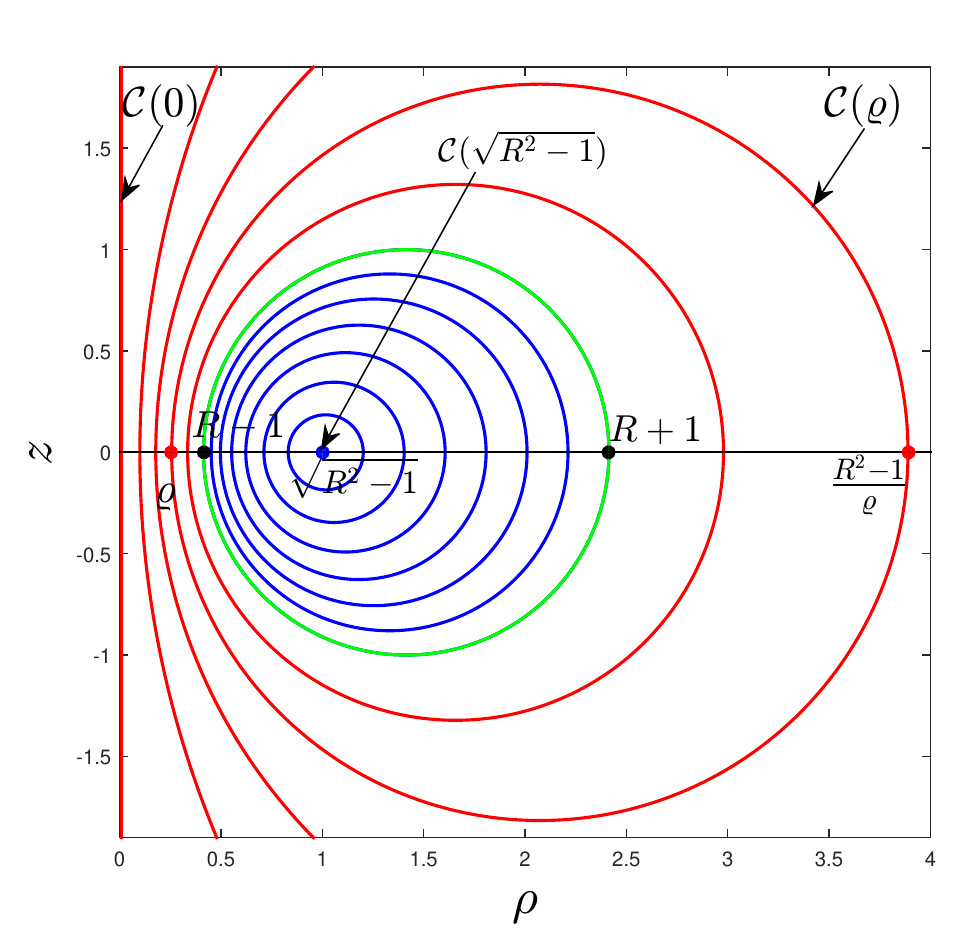}}
\caption{The circles  $\mathcal{C}(\varrho) := \mathcal{C}(\varrho; R)$ for a fixed $R \in (1,\infty)$.
Note that $\mathcal{C}(\varrho; R) = \mathcal{C}( (R^2-1)/\varrho ; R)$
and $\{\mathcal{C}(\varrho; R): \varrho \in [0,\sqrt{R^2-1}]\}$ partitions the half-$\rho$-$z$-plane $\rho\geq 0$, so $\mathcal{T}(\varrho; R) = \mathcal{T}( (R^2-1)/\varrho ; R)$,
and $\{ \mathcal{T}(\varrho; R): \varrho \in [0,\sqrt{R^2-1}]\}$ partitions $\bR^3$.
Notice also that $\mathcal{T}(R\pm 1; R) = T_R$ = the torus of revolution generated by revolving the green circle about the $z$-axis.}
\label{fig:Circles}
\end{figure}

\begin{theorem}[Proven in \cite{YuChen:UniquenessAMS}]\label{thm:Thm1}
    For any fixed $R \in (1,\infty)$ and $\varrho \in \big[ 0, \sqrt{R^2-1} \big]$, all the cyclides in
    \begin{eqnarray} \label{eq:SameShape}
    \big\{i_{\mathbf{x}}(T_R): \mathbf{x} \in \mathcal{T}(\varrho; R) \big\}
    \end{eqnarray}
    are homothetic in $\bR^3$.
\end{theorem}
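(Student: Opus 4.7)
The plan is to exhibit a subgroup $G$ of the M\"obius group of $\mathbb{R}^3\cup\{\infty\}$ that preserves $T_R$ setwise and acts transitively on each torus $\mathcal{T}(\varrho;R)$. Granted this, the theorem follows by a standard conjugation argument: given $\mathbf{x},\mathbf{x}'\in\mathcal{T}(\varrho;R)$, pick $g\in G$ with $g(\mathbf{x})=\mathbf{x}'$ and set $M \coloneqq i_{\mathbf{x}'}\circ g\circ i_{\mathbf{x}}^{-1}$. Since $i_{\mathbf{x}}(\mathbf{x})=\infty$ and $i_{\mathbf{x}'}(\mathbf{x}')=\infty$, the M\"obius map $M$ fixes $\infty$, hence is a Euclidean similarity. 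Because $g$ preserves $T_R$, we obtain $M\bigl(i_{\mathbf{x}}(T_R)\bigr)=i_{\mathbf{x}'}(T_R)$, giving the desired homothety. (The boundary case $\varrho=R-1$, where $\mathcal{T}(\varrho;R)=T_R$ and the $i_{\mathbf{x}}(T_R)$ are spheres by convention, is trivial since all spheres are homothetic.)

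To construct $G$, I would lift the picture to $S^3$. Choose $\alpha\in(0,\pi/2)$ with $\csc\alpha=R$ and fix a stereographic projection $\Pi:S^3\setminus\{N\}\to\mathbb{R}^3$ mapping the rectangular Clifford torus $C_\alpha$ to $T_R$ (possible by the discussion preceding \cref{thm:Thm1}, and achievable so that one $U(1)$ factor of $SO(4)$ corresponds to rotation about the $z$-axis in $\mathbb{R}^3$). The subgroup $U(1)\times U(1)\subset SO(4)$, acting on $\mathbb{R}^4$ by independent rotations in the first two and last two coordinates, preserves each flat torus $C_\beta\subset S^3$ parallel to $C_\alpha$---including $C_\alpha$ itself---and acts transitively on each such $C_\beta$. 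Conjugating by $\Pi$ yields a subgroup $G\subset\mathrm{M\ddot{o}b}(\mathbb{R}^3)$ that preserves $T_R$ setwise and acts on $\mathbb{R}^3$ with tori-of-revolution orbits (plus two degenerate circles corresponding to the non-generic flat tori $C_0$, $C_{\pi/2}$ in $S^3$).

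The main obstacle is matching the orbits of $G$ with the specific family $\{\mathcal{T}(\varrho;R)\}$ from \cref{fig:Circles}. Rotations about the $z$-axis (the first $U(1)$ factor) act transitively on the latitude circles, so it suffices to check that the second $U(1)$ flow sweeps out the meridian circles $\mathcal{C}(\varrho;R)$. This reduces to a classical fact of M\"obius geometry: the family $\{\mathcal{C}(\varrho;R)\}_{\varrho}$ is the coaxial pencil of circles in the $\rho$-$z$ half-plane with limit points $(0,0)$ and $(\sqrt{R^2-1},0)$, and this pencil coincides with the image under $\Pi$ of the coaxial flat-tori foliation of $S^3$ through $C_\alpha$. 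The verification can be carried out either by a direct coordinate computation---writing the second $U(1)$ factor explicitly, pushing it through $\Pi$, and checking that the resulting orbits agree with $\mathcal{C}(\varrho;R)$---or by recognizing both pencils as the unique coaxial pencil in the $\rho$-$z$ half-plane orthogonal to the defining circle of $T_R$. Either route identifies the orbits of $G$ with $\{\mathcal{T}(\varrho;R)\}$ and completes the proof.
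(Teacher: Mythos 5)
Your argument is correct, but note that this paper never reproves \cref{thm:Thm1}: it simply cites \cite{YuChen:UniquenessAMS}, and elsewhere (Section 3) it works with Maxwell's ellipse--hyperbola characterization and explicit circle-pair computations, using \cref{thm:Thm1} only as an input that lets one restrict to inversion centers on the $x$-axis. Your conjugation argument is a self-contained, conceptually clean alternative: $i_{\mathbf{x}'}\circ g\circ i_{\mathbf{x}}$ fixes $\infty$, hence is a similarity, and carries $i_{\mathbf{x}}(T_R)$ to $i_{\mathbf{x}'}(g(T_R))=i_{\mathbf{x}'}(T_R)$, so everything reduces to producing a M\"obius group preserving $T_R$ whose orbits are the tori $\mathcal{T}(\varrho;R)$, and your lift to the $U(1)\times U(1)$ action on $S^3$ does produce it. The orbit identification you defer is indeed routine: taking the projection from $(0,0,0,1)$ followed by the dilation by $\sqrt{R^2-1}$, the image of $C_\beta$ is the torus of revolution whose meridian meets the $\rho$-axis at the abscissas $\sqrt{R^2-1}\,\cos\beta/(1\mp\sin\beta)$, whose product is $R^2-1$; since the circles $\mathcal{C}(\varrho;R)$ are exactly the circles centered on the $\rho$-axis meeting it in two points with product $R^2-1$ (diameter endpoints $\varrho$ and $(R^2-1)/\varrho$), each image torus is some $\mathcal{T}(\varrho;R)$, with $\beta=\alpha$ (i.e.\ $\sin\alpha=1/R$) giving $R\pm1$, hence $T_R$, and the degenerate orbits going to the $z$-axis and the circle of radius $\sqrt{R^2-1}$, i.e.\ $\varrho=0$ and $\varrho=\sqrt{R^2-1}$. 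Your treatment of centers lying on $T_R$ itself via the paper's round-sphere convention is also fine.

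Two slips in your optional shortcut should be corrected, although they do not affect the proof because your first (direct computation) route goes through. Each $\mathcal{C}(\varrho;R)$ has center $(x_0,0)$ and radius $r$ with $x_0^2-r^2=R^2-1$, so the family is the hyperbolic (Apollonius) pencil with limit points $(\pm\sqrt{R^2-1},0)$; the origin is \emph{not} a limit point --- the $z$-axis is the line member of that pencil, which merely passes through $(0,0)$. Moreover the pencil is not ``orthogonal to the defining circle of $T_R$'': that circle (center $(R,0)$, radius $1$) is itself the member $\mathcal{C}(R-1;R)$, and in any case a single circle does not determine a pencil; a correct intrinsic description is the pencil of circles orthogonal to every circle through the two points $(\pm\sqrt{R^2-1},0)$, equivalently the circles centered on the $\rho$-axis having power $R^2-1$ with respect to the origin.
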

In virtue of this result, for $\varrho \in [0,\sqrt{R^2-1}]$ we use the shorthand notation $ i_{\varrho}(T_R)$ to represent the (equivalence class of)
common Euclidean shape of the cyclides in \eqref{eq:SameShape}.
Note that $i_{R-1}(T_R)$ is the round sphere.
\begin{theorem}[Stated in \cite{YuChen:UniquenessAMS}, proven in \cref{sec:Theorem2and3}] \label{thm:Thm2}
    For any $R \in (1,\infty)$, $\varrho \in \big[ 0, \sqrt{R^2-1} \big]$,
    \begin{eqnarray}
    \label{eq:rhoDuality}
    i_{\varrho}(T_{R}) = i_{\varrho'}(T_{R'}) \;\;\; \mbox{ where } \;\;\;
    (R', \varrho') = \frac{1}{\sqrt{R^2-1}} \Big(R, \frac{\sqrt{R^2-1} - \varrho}{\sqrt{R^2-1}  + \varrho} \Big).
    \end{eqnarray}
\end{theorem}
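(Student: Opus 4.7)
The plan is to realize the duality \eqref{eq:rhoDuality} as the image, under stereographic projection, of the $4$D coordinate swap
\[
\sigma:(x_1,x_2,x_3,x_4)\longmapsto(x_3,x_4,x_1,x_2),
\]
which is an isometry of $S^3\subset\mathbb{R}^4$ exchanging the two orthogonal $2$-planes carving out \eqref{eq:CliffordTorusS3}. In particular, $\sigma(C_\gamma)=C_{\pi/2-\gamma}$ for every $\gamma\in[0,\pi/2]$, so $\sigma$ preserves the Hopf foliation of $S^3$ by Clifford tori, with parameter $\gamma\mapsto\pi/2-\gamma$.

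The first step is to set up the dictionary between the Hopf foliation of $S^3$ and the $\mathbb{R}^3$ foliation from \cref{fig:Circles}. Let $\pi$ denote stereographic projection from the pole $(0,0,0,1)$. A direct computation on the $(u,v)$-parametrization of $C_\gamma$ shows that $\pi(C_\gamma)$ is the torus of revolution about the $z$-axis whose $\rho$-axis cross-section is the circle $(\rho-\sec\gamma)^2+z^2=\tan^2\gamma$. Choosing $\alpha\in(0,\pi/2)$ with $\sin\alpha=1/R$ and letting $\Psi_R$ denote multiplication by $\cot\alpha=\sqrt{R^2-1}$, one checks that $T_R=\Psi_R(\pi(C_\alpha))$, and more generally that the family $\{\Psi_R(\pi(C_\gamma))\}_{\gamma\in[0,\pi/2]}$ coincides with the foliation $\{\mathcal{T}(\varrho;R)\}_{\varrho\in[0,\sqrt{R^2-1}]}$, via
\[
\frac{\varrho}{\sqrt{R^2-1}}\;=\;\sec\gamma-\tan\gamma\;=\;\frac{1-\sin\gamma}{\cos\gamma}.
\]

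Next, push $\sigma$ down to $\mathbb{R}^3$ by conjugating through the two scaled projections, producing the M\"obius transformation
$F:=\Psi_{R'}\circ\pi\circ\sigma\circ\pi^{-1}\circ\Psi_R^{-1}$
of $\mathbb{R}^3\cup\{\infty\}$. Since $R'=\sec\alpha$ satisfies $\sqrt{R'^2-1}=\tan\alpha=1/\sqrt{R^2-1}$, the Step~1 calculation applied to $\pi/2-\alpha$ in place of $\alpha$ yields $\Psi_{R'}(\pi(C_{\pi/2-\alpha}))=T_{R'}$; together with $\sigma(C_\gamma)=C_{\pi/2-\gamma}$, this gives $F(T_R)=T_{R'}$ and, leaf-by-leaf, $F(\mathcal{T}(\varrho;R))=\mathcal{T}(\varrho';R')$ with $\varrho'/\sqrt{R'^2-1}=\csc\gamma-\cot\gamma$. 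The half-angle substitution $t=\tan(\gamma/2)$ turns the two identifications into $\varrho/\sqrt{R^2-1}=(1-t)/(1+t)$ and $\varrho'/\sqrt{R'^2-1}=t$, which (using $\sqrt{R'^2-1}=1/\sqrt{R^2-1}$) collapses to precisely \eqref{eq:rhoDuality}.

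Finally, to upgrade the correspondence of leaves to the required equality of \emph{shape} classes, pick any $\mathbf{x}\in\mathcal{T}(\varrho;R)$ such that $\mathbf{y}:=F(\mathbf{x})$ is finite (the choice within $\mathcal{T}(\varrho;R)$ is free by \cref{thm:Thm1}, and $F$ has only one pole). Then $\mathbf{y}\in\mathcal{T}(\varrho';R')$, and the composition
$S:=i_{\mathbf{y}}\circ F\circ i_{\mathbf{x}}$
sends $\infty\mapsto\mathbf{x}\mapsto\mathbf{y}\mapsto\infty$, so $S$ fixes $\infty$ and is therefore a Euclidean similarity. Using $i_{\mathbf{x}}^{\,2}=\mathrm{id}$ and $F(T_R)=T_{R'}$ one reads off $S(i_{\mathbf{x}}(T_R))=i_{\mathbf{y}}(T_{R'})$, so $i_{\mathbf{x}}(T_R)$ and $i_{\mathbf{y}}(T_{R'})$ are homothetic and $i_{\varrho}(T_R)=i_{\varrho'}(T_{R'})$ as shape classes. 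All of the conceptual content is carried by the $4$D symmetry $\sigma$ and by \cref{thm:Thm1}; the main technical obstacle is the Step~1 bookkeeping of the two independent rescalings $\Psi_R$ and $\Psi_{R'}$, which must be tracked carefully so that the tan-half-angle identities collapse to the exact rational expression in \eqref{eq:rhoDuality}.
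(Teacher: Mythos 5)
Your proof is correct, but it takes a genuinely different route from the paper's. The paper disposes of \cref{thm:Thm2} in one line: it invokes Maxwell's fact that the $P_1$-ratio determines the Euclidean shape of a toroidal Dupin cyclide, and then simply checks that substituting $(R',\varrho')$ from \eqref{eq:rhoDuality} into the ratio \eqref{eq:ratioInside} reproduces the ratio \eqref{eq:ratioOutside}, where both ratios come from the $1$-D inversion computation in \cref{lemma:P1ratios}. You instead lift the whole picture to $\mathbb{S}^3$: the duality is exhibited as the isometry $\sigma$ swapping the two coordinate $2$-planes, which exchanges $C_\gamma$ and $C_{\pi/2-\gamma}$, conjugated back to a M\"obius transformation $F$ of $\widehat{\mathbb{R}^3}$ via stereographic projection and the two rescalings $\Psi_R$, $\Psi_{R'}$; matching the leaf parameters through $\varrho/\sqrt{R^2-1}=\sec\gamma-\tan\gamma$ and the half-angle substitution yields exactly \eqref{eq:rhoDuality}, and the final conjugation $i_{\mathbf{y}}\circ F\circ i_{\mathbf{x}}$ (a M\"obius map fixing $\infty$, hence a similarity) upgrades the leaf correspondence to equality of shape classes via \cref{thm:Thm1}. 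I checked your computations (the cross-section $(\rho-\sec\gamma)^2+z^2=\tan^2\gamma$, the identification $\Psi_R(\pi(C_\gamma))=\mathcal{T}(\varrho;R)$, and the collapse to \eqref{eq:rhoDuality}) and they are all right; the only unaddressed point is the trivial boundary value $\varrho=R-1$, where $\varrho'=R'-1$ and both sides are round spheres by the paper's convention. What your approach buys is conceptual transparency — it explains \emph{why} the duality exists and where the M\"obius formula for $\varrho'$ comes from, without needing the explicit ratio formulas — at the cost of importing standard conformal-geometry facts (stereographic projection conjugates isometries of $\mathbb{S}^3$ to M\"obius maps; M\"obius maps fixing $\infty$ are similarities). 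The paper's route is less illuminating but essentially free, since \cref{lemma:P1ratios} and the Maxwell-ratio machinery are needed anyway for \cref{thm:Thm3}.
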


\begin{theorem}[Stated in \cite{YuChen:UniquenessAMS}, proven in \cref{sec:Theorem2and3}] \label{thm:Thm3}
    The shapes in
    \begin{eqnarray} \label{eq:All}
    \Big\{ i_{\varrho}(T_{R}) : R \in (1,\infty), \; \varrho \in [0,R-1) \Big\}
    \end{eqnarray}
    are all distinct from each other.
    Moreover, any toroidal Dupin cyclide is homothetic to exactly one of the shapes in~\eqref{eq:All}.
\end{theorem}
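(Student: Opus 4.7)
The plan is to prove \cref{thm:Thm3} in two parts — surjectivity of the parameterization $(R, \varrho) \mapsto i_\varrho(T_R)$ onto the Euclidean shapes of toroidal Dupin cyclides, which follows from \cref{thm:Thm1,thm:Thm2} after a short bookkeeping calculation, and injectivity, which will additionally require a M\"obius-theoretic input.

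For surjectivity: any toroidal Dupin cyclide is $i_{\mathbf{x}}(T_R)$ for some $R \in (1, \infty)$ and $\mathbf{x} \notin T_R$, hence by \cref{thm:Thm1} homothetic to $i_\varrho(T_R)$ for the unique $\varrho \in [0, \sqrt{R^2-1}] \setminus \{R-1\}$ with $\mathbf{x} \in \mathcal{T}(\varrho; R)$. If $\varrho < R-1$ we are done. Otherwise $\varrho \in (R-1, \sqrt{R^2-1}]$, and a direct check using~\eqref{eq:rhoDuality} shows that $\varrho \mapsto \varrho'$ is strictly decreasing, with $\varrho = R-1$ sent to $\varrho' = R'-1$ and $\varrho = \sqrt{R^2-1}$ sent to $\varrho' = 0$, so \cref{thm:Thm2} produces a dual representation with $\varrho' \in [0, R'-1)$.

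For injectivity, assume $i_{\varrho_1}(T_{R_1})$ and $i_{\varrho_2}(T_{R_2})$ are homothetic with $\varrho_j \in [0, R_j-1)$, and pick points $\mathbf{x}_j \in \mathcal{T}(\varrho_j; R_j)$. A similarity $\sigma$ of $\mathbb{R}^3$ realizing the homothety yields a M\"obius automorphism $f := i_{\mathbf{x}_2} \circ \sigma \circ i_{\mathbf{x}_1}$ of $\mathbb{R}^3 \cup \{\infty\}$ that maps $T_{R_1}$ onto $T_{R_2}$ and sends $\mathbf{x}_1$ to $\mathbf{x}_2$ (using $\sigma(\infty)=\infty$). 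The conformal modulus of $T_R$ as a Riemann surface (inherited from the flat Clifford torus $C_\alpha$ with $R = \csc \alpha$) equals $1/\sqrt{R^2-1}$ up to the modular involution $\tau \mapsto 1/\tau$, so the existence of such an $f$ forces $R_1 \in \{R_2, R_2'\}$ with $R_2' = R_2/\sqrt{R_2^2-1}$. In the subcase $R_1 = R_2 =: R$, the map $f$ belongs to the M\"obius stabilizer of $T_R$, whose identity component — the $U(1) \times U(1)$ action on the two Hopf factors — has orbits on $\mathbb{R}^3$ equal precisely to the partition $\{\mathcal{T}(\varrho; R)\}$, so $\varrho_1 = \varrho_2$. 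In the subcase $R_1 = R_2'$, \cref{thm:Thm2} applied to $(R_1, \varrho_1)$ yields a representation $i_{\varrho_1'}(T_{R_2})$ of the same shape with $\varrho_1' > R_2-1$ (again by the surjectivity analysis), and applying the previous subcase to $(R_2, \varrho_1')$ versus $(R_2, \varrho_2)$ forces $\varrho_1' = \varrho_2$, contradicting $\varrho_2 < R_2-1$.

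The main obstacle will be making the two M\"obius-geometric facts above fully rigorous: that the M\"obius class of $T_R$ is determined exactly up to the duality $R \leftrightarrow R'$, and that the M\"obius stabilizer of $T_R$ acts on $\mathbb{R}^3$ with orbits exactly $\{\mathcal{T}(\varrho; R)\}$. Both can be verified by lifting to $\mathbb{S}^3$ and exploiting the explicit $U(1) \times U(1)$ symmetry of the preimage Clifford torus, which is already the framework underlying \cref{thm:Thm1}.
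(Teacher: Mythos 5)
Your surjectivity argument is fine and is essentially the paper's (combine \cref{thm:Thm1} with the duality \eqref{eq:rhoDuality} to fold $\varrho\in(R-1,\sqrt{R^2-1}]$ into $[0,R'-1)$). The injectivity half, however, is where the real content lies, and there your proposal has a genuine gap: the two M\"obius-geometric inputs you invoke are exactly the crux and are not proved, only asserted to be verifiable ``by exploiting the $U(1)\times U(1)$ symmetry.'' That symmetry gives you the identity component of the stabilizer and its orbits, but your argument needs control of the \emph{full} M\"obius stabilizer: the map $f=i_{\mathbf{x}_2}\circ\sigma\circ i_{\mathbf{x}_1}$ has no reason to lie in the identity component, so the sentence ``whose identity component \dots has orbits $\{\mathcal{T}(\varrho;R)\}$, so $\varrho_1=\varrho_2$'' is a non sequitur as written. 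To close it you must show that the conformal stabilizer of the Clifford torus contains no elements beyond the isometric one (e.g.\ by showing every conformal/anticonformal automorphism of the flat rectangular torus is realized by an isometry of $\mathbb{S}^3$ preserving $C_\alpha$, and that a M\"obius map fixing a non-spherical surface pointwise is the identity); knowing the $U(1)\times U(1)$ action does not by itself exclude extra non-isometric symmetries. Note also that your full-stabilizer orbit claim is literally false for $R=\sqrt{2}$: the factor swap preserves $T_{\sqrt 2}$ but sends $\mathcal{T}(\varrho;\sqrt2)$ to its dual torus, so orbits there are dual pairs (harmless for your conclusion, since both $\varrho_j<R-1$, but it must be stated and handled). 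The modulus step (a M\"obius map carrying $T_{R_1}$ to $T_{R_2}$ forces $R_1\in\{R_2,R_2'\}$) is correct but likewise needs the standard computation that the conformal modulus of $T_R$ is $1/\sqrt{R^2-1}$ up to inversion, together with the remark that $f$ restricts to a (possibly anticonformal) conformal equivalence.

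It is worth contrasting this with what the paper actually does, which sidesteps all stabilizer and moduli considerations: it quotes Maxwell's characterization, by which the $P_1$-ratio $r_1:r_2:d$ (equivalently the Maxwell ratio \eqref{eq:Maxwellratio}) already determines the Euclidean shape of a toroidal Dupin cyclide, computes this ratio in closed form for $i_{[\varrho,0,0]}(T_R)$ via one-dimensional inversions (\cref{lemma:P1ratios}), and then proves injectivity on $\{(R,\varrho):R>1,\ \varrho\in[0,R-1)\}$ by exhibiting an explicit inverse of the map $(R,\varrho)\mapsto\bigl(\tfrac{2\varrho}{R^2-\varrho^2-1},\tfrac{R^2+\varrho^2-1}{R(R^2-\varrho^2-1)}\bigr)$. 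Your conformal-invariant route is viable and conceptually appealing (it explains the $R\leftrightarrow R'$ duality via moduli rather than by the algebraic identity behind \eqref{eq:ratioOutside}--\eqref{eq:ratioInside}), but until the full-stabilizer rigidity statement is proved it is a plan rather than a proof, whereas the paper's argument is a short, elementary, and fully explicit computation.
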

Note that for any fixed $R \in (1,\infty)$, the mapping $\rho \mapsto \rho'$ in \eqref{eq:rhoDuality} maps $(R-1,\sqrt{R^2-1}]$ bijectively (and decreasingly)
to $[0,R'-1)$, where $R'=\frac{R}{\sqrt{R^2-1}}$ (as in \eqref{eq:rhoDuality}).
By Theorem~\ref{thm:Thm2},
\begin{eqnarray} \label{eq:DualIntervals}
\Big\{ i_\varrho(T_R): \varrho \in (R-1, \sqrt{R^2-1}] \Big\} =
\Big\{ i_\varrho(T_{R'}): \varrho \in [0, R'-1) \Big\}.
\end{eqnarray}
The special role of the square Clifford torus manifests itself in the $R \leftrightarrow R'$ correspondence:
note that $R'=\sqrt{2}$ if and only if $R=\sqrt{2}$.

Together with Theorem~\ref{thm:Thm1} and \ref{thm:Thm3}, we can conclude:

\begin{corollary} \label{Cor}
    When $R \neq \sqrt{2}$, the set of distinct shapes in $\Big\{i_{\mathbf{x}}(T_R): \mathbf{x} \in \bR^3 \backslash T_R \Big\}$ are
    those in
    \begin{eqnarray} \label{eq:Rectangular1}
    \big\{i_\varrho(T_R): \varrho \in [0, \sqrt{R^2-1}]\backslash\{R-1\} \big\},
    \end{eqnarray}
    or equivalently, those in
    \begin{eqnarray} \label{eq:Rectangular}
    \Big\{ i_\varrho(T_R): \varrho \in [0, R-1) \Big\} \bigcup \Big\{ i_\varrho(T_{R'}): \varrho \in [0, R'-1) \Big\}, \;\;  R'=\frac{R}{\sqrt{R^2-1}}.
    \end{eqnarray}
    When $R=\sqrt{2}$,
    the distinct shapes in $\Big\{i_{\mathbf{x}}(T_{\sqrt{2}}): \mathbf{x} \in \bR^3 \backslash T_{\sqrt{2}} \Big\}$ are those in
    $\Big\{ i_\varrho(T_{\sqrt{2}}): \varrho \in [0, \sqrt{2}-1) \Big\}$.
\end{corollary}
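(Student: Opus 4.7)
The plan is to combine the partition statement from the caption of \cref{fig:Circles} with \cref{thm:Thm1,thm:Thm2,thm:Thm3}; there is essentially no new content beyond careful bookkeeping of index sets.

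First, I would use the fact that $\{\mathcal{T}(\varrho; R) : \varrho \in [0, \sqrt{R^2-1}]\}$ partitions $\bR^3$: every $\mathbf{x} \in \bR^3$ lies on a unique $\mathcal{T}(\varrho; R)$, and the value $\varrho = R-1$ is the one for which $\mathcal{T}(R-1;R) = T_R$. Thus $\mathbf{x} \in \bR^3 \setminus T_R$ is in bijection with $\varrho \in [0, \sqrt{R^2-1}] \setminus \{R-1\}$. By \cref{thm:Thm1}, $i_\mathbf{x}(T_R)$ depends only on $\varrho$, which shows that the set of shapes in $\{i_\mathbf{x}(T_R) : \mathbf{x} \in \bR^3 \setminus T_R\}$ is exactly the family \eqref{eq:Rectangular1}.

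Next, to pass from \eqref{eq:Rectangular1} to \eqref{eq:Rectangular}, I would split the $\varrho$-range at $R-1$ into $[0, R-1)$ and $(R-1, \sqrt{R^2-1}]$. The first piece already matches the first set in \eqref{eq:Rectangular}. The second piece I would convert via the duality \eqref{eq:rhoDuality} of \cref{thm:Thm2}: as noted immediately after that theorem, the map $\varrho \mapsto \varrho'$ is a decreasing bijection $(R-1, \sqrt{R^2-1}] \to [0, R'-1)$ with $R' = R/\sqrt{R^2-1}$, yielding \eqref{eq:DualIntervals}. Substituting into the split of \eqref{eq:Rectangular1} then gives \eqref{eq:Rectangular}.

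For distinctness, when $R \neq \sqrt{2}$ one has $R' \neq R$, so the two families in \eqref{eq:Rectangular} use different ``parent'' tori; by \cref{thm:Thm3}, shapes in the master family \eqref{eq:All} are pairwise distinct as $(R,\varrho)$ varies over $R \in (1,\infty)$ and $\varrho \in [0,R-1)$, so each piece of \eqref{eq:Rectangular} is internally distinct and the two pieces are mutually disjoint. When $R = \sqrt{2}$ one has $R' = R$, $\sqrt{R^2-1} = 1$, and $R - 1 = \sqrt{2}-1$, so \eqref{eq:DualIntervals} specializes to $\{i_\varrho(T_{\sqrt{2}}) : \varrho \in (\sqrt{2}-1, 1]\} = \{i_\varrho(T_{\sqrt{2}}) : \varrho \in [0, \sqrt{2}-1)\}$: the two halves of \eqref{eq:Rectangular1} coincide, collapsing to the stated single family. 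The main (really the only) obstacle is careful handling of the boundary values $\varrho = R-1$ and $\varrho = \sqrt{R^2-1}$ under the duality; the substance is already supplied by \cref{thm:Thm1,thm:Thm2,thm:Thm3}.
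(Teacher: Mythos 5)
Your proposal is correct and follows essentially the same route as the paper: the paper derives the corollary from the partition property of $\{\mathcal{T}(\varrho;R)\}$ together with \cref{thm:Thm1}, the duality of \cref{thm:Thm2} via \eqref{eq:DualIntervals} (including the bijection $(R-1,\sqrt{R^2-1}]\to[0,R'-1)$ and the observation $R'=\sqrt 2 \iff R=\sqrt 2$), and the distinctness statement of \cref{thm:Thm3}, which is exactly your bookkeeping. No gap to report.
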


Now Theorem~\ref{thm:MainYuChen} follows by combining the second part of Corollary~\ref{Cor} with \cref{thm:MeMeBoYu}. Moreover, we can now show that \cref{thm:Main} implies \cref{thm:Main2}:
\begin{proof}[Proof of \cref{thm:Main2}]
By Corollary~\ref{Cor}, when
$R \neq \sqrt{2}$, the distinct shapes of $\{i_{\bf x}(T_R): {\bf x} \in \bR^3\backslash T_R\}$  are those
in \eqref{eq:Rectangular1}.
As $\varrho$ increases from $0$ to $R-1$, the isoperimetric ratio of $i_\varrho(T_R)$ increases from $3/(2\sqrt{\pi R})$ to $1$, by \cref{thm:Main}.
But when $\varrho$ further increases from $R-1$ to $\sqrt{R^2-1}$, the isoperimetric ratio of $i_\varrho(T_R)$
is that of $i_{\rho'}(T_{R'})$, where $(R',\rho')$ given by \eqref{eq:rhoDuality}, which \emph{decreases}
from $1$ to $3/(2\sqrt{\pi R})$. So for any $v \in \big[ \max\big(\frac{3}{2\sqrt{\pi R}}, \frac{3}{2\sqrt{\pi R'}}\big), 1 \big)$,
there are  two distinct $i_\varrho(T_R)$ sharing the same isoperimetric ratio $v$.
\end{proof}
For a visualization of the proof above we refer to Figure~\ref{fig:IsoPlot}; the isoperimetric ratios can be computed either
using numerical integration or the hypergeometric representations
proved in Propositions~\ref{prop:AR} and \ref{prop:VR}.

\begin{figure}[ht]
\centerline{
\includegraphics[height=2.3in]{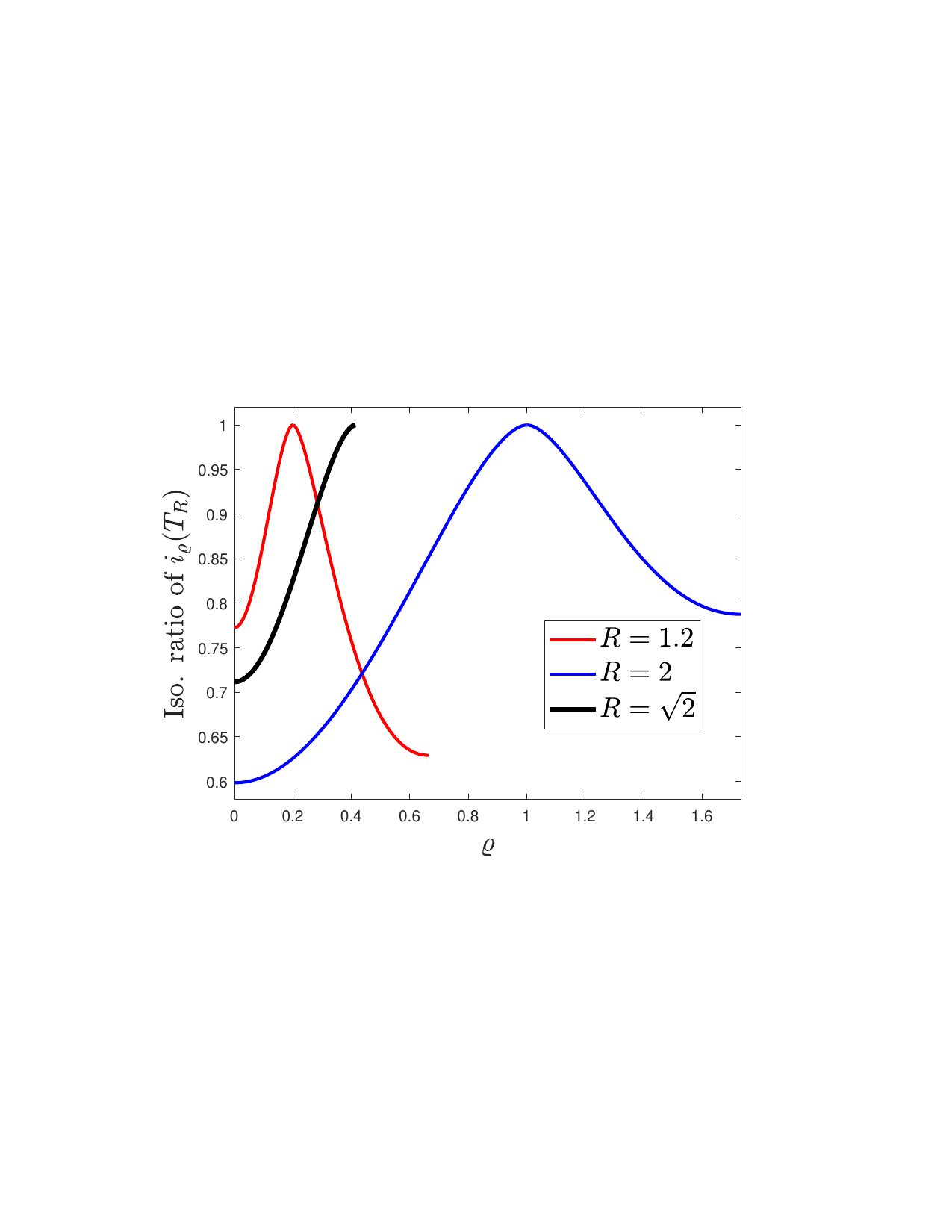}}
\caption{Isoperimetric ratio of $i_\varrho(T_R)$. 
When
$R \neq \sqrt{2}$, all the shapes in
$\{i_\varrho(T_R): \varrho \in [0,\sqrt{R^2-1}]$ are distinct. When $R=\sqrt{2}$, only the shapes in $\{i_\varrho(T_{\sqrt{2}}): \varrho \in [0,\sqrt{2}-1]\}$ are distinct.}
\label{fig:IsoPlot}
\end{figure}


\section{Maxwell ratios and proofs of Theorem~\ref{thm:Thm2} and \ref{thm:Thm3}} \label{sec:Theorem2and3}
The proofs of the Theorems~\ref{thm:Thm2} and~\ref{thm:Thm3} boil down to an elementary calculation involving (circle inversions of) circle pairs once we accept the following facts, which follow
from the Maxwell's ellipse-hyperbola characterization of Dupin cyclides  \cite{Maxwell:Cyclide,Boehm:cyclide,CDH:cyclide}.

Every torodial Dupin cyclide $\mathfrak{C}$ has two mutually orthogonal symmetry planes, at one of which the cross section
of $\mathfrak{C}$ consists of \emph{two circles exterior to each other}, and the other consisting of \emph{two circles with one lying inside another}.
We call the former symmetry plane $P_1$, and the latter $P_2$.
Denote by $r_1$, $r_2$, $r_1 \geq r_2$, the radii of the two circles on the $P_1$ cross-section, and by $d$ the distance of their centers, then the ratio
\begin{eqnarray} \label{eq:P1ratio}
r_1:r_2:d
\end{eqnarray}
uniquely determines the Euclidean shape of $\mathfrak{C}$; we refer to \eqref{eq:P1ratio} as the \emph{$P_1$-ratio of $\mathfrak{C}$}.
Similarly, denote by $\tilde{r}_1$, $\tilde{r}_2$, $\tilde{r}_1 \geq \tilde{r}_2$, the radii of the two circles on the $P_2$ cross-section, and $\tilde{d}$ the distance of their centers,
then the \emph{$P_2$-ratio}
\begin{eqnarray} \label{eq:P2ratio}
\tilde{r}_1:\tilde{r}_2:\tilde{d}
\end{eqnarray}
also uniquely determines the Euclidean shape of $\mathfrak{C}$. Using Maxwell's property (see \cite[Section 2]{YuChen:UniquenessAMS} for details),
$(r_1,r_2,d)$ and $(\tilde{r}_1, \tilde{r}_2, \tilde{d})$ are related by the linear isomorphism
$\tilde{r}_1 = \frac{d + (r_1+r_2)}{2}$, $\tilde{r}_2 = \frac{d - (r_1+r_2)}{2}$, $\tilde{d} = r_1 - r_2$,
so each of the two ratios in \eqref{eq:P1ratio}-\eqref{eq:P2ratio} determines the other. For the proof of Theorem~\ref{thm:Thm3},
it is slightly more convenient to use the ratio
\begin{eqnarray}  \label{eq:Maxwellratio}
a:f:(L-a), \;\mbox{where $a = d/2$, $f=(r_1-r_2)/2$, $L=(r_1+r_2+d)/2$,}
\end{eqnarray}
to determine the Euclidean shape of $\mathfrak{C}$. It happens so that $a$, $f$ and $L$ are the relevant measurements in the
Maxwell's characterization of $\mathfrak{C}$ \cite[Section 2]{YuChen:UniquenessAMS}, we simply refer to \eqref{eq:Maxwellratio} as the \emph{Maxwell ratio} of $\mathfrak{C}$.

\begin{lemma}[Equivalent to Lemma 2.3 in \cite{YuChen:UniquenessAMS}]\label{lemma:P1ratios}
Let $R \in (1,\infty)$ and $\mathfrak{C} = i_{[\varrho,0,0]}(T_R)$.
  \begin{itemize}
  \item[(i)] If $\varrho \in [0,R-1)$,
  the $P_1$-ratio of $\mathfrak{C}$ is
  \begin{eqnarray}
  \label{eq:ratioOutside}
  \begin{aligned}
  r_1:r_2:d = (R + \varrho)^2 - 1: (R - \varrho)^2 - 1 : 2 R (R^2 - \varrho^2 - 1).
   \end{aligned}
   \end{eqnarray}
  \item[(ii)] If $\varrho \in \left(R-1, \sqrt{R^2-1} \right]$,
the $P_1$-ratio of $\mathfrak{C}$ is
   \begin{eqnarray}
  \label{eq:ratioInside}
  \begin{aligned}
r_1:r_2:d = (R-1)((R+1)^2-\varrho^2) : (R+1)(\varrho^2-(R-1)^2) : 4R\varrho.
\end{aligned}
   \end{eqnarray}
\end{itemize}
\end{lemma}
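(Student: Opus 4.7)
The plan is to exploit the fact that the sphere inversion $i_{[\varrho,0,0]}$ fixes the two orthogonal planes $\{y=0\}$ and $\{z=0\}$ (both contain the inversion centre), and that each of these planes is already a symmetry plane of $T_R$. Consequently, $\mathfrak{C}$ inherits reflection symmetries across them; by the Maxwell characterisation recalled above these must be its two symmetry planes $P_1$ and $P_2$, and the entire computation reduces to two copies of a planar circle inversion applied to the cross-sections of $T_R$.

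Explicitly, $T_R \cap \{y=0\}$ is the pair of unit circles centred at $(\pm R, 0)$ in the $xz$-plane, while $T_R \cap \{z=0\}$ is the pair of concentric circles of radii $R \pm 1$ about the origin in the $xy$-plane. For each of these four circles I would apply the planar inversion about $(\varrho,0)$ with radius $1$. The cleanest way is to invert the two points where the circle meets the $x$-axis; the two images are then diametrically opposite on the image circle, so the centre and radius of the image follow as the midpoint and half the width of the resulting diameter. This gives closed-form expressions for all relevant centres and radii as simple rational functions of $R$ and $\varrho$.

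Next I would decide which cross-section supplies the exterior pair of circles (hence $P_1$) and which supplies the nested pair ($P_2$); this depends entirely on the sign of $\varrho - (R-1)$. In case (i), $\varrho \in [0, R-1)$, the inversion centre lies strictly between the two cross-sectional circles in $\{y=0\}$ and strictly inside the inner circle of $\{z=0\}$; a short distance-vs-sum-of-radii check shows that $\{y=0\}$ then yields the exterior configuration. In case (ii), $\varrho \in (R-1, \sqrt{R^2-1}]$, the inversion centre sits inside one of the two circles in $\{y=0\}$ and between the two concentric circles in $\{z=0\}$; this time the same check shows that $\{z=0\}$ yields the exterior configuration. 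In each regime I read off $(r_1, r_2, d)$ from the appropriate cross-section, clear the common denominator (namely $((R-\varrho)^2-1)((R+\varrho)^2-1)$ in case (i) and $((R+1)^2 - \varrho^2)(\varrho^2 - (R-1)^2)$ in case (ii)), and the resulting polynomials factor precisely into the form displayed in \eqref{eq:ratioOutside}, \eqref{eq:ratioInside}.

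There is no substantive obstacle: the whole argument is routine coordinate geometry. The only real care needed is (a) the sign convention for 2D inversion when the centre lies inside versus outside a given circle, and (b) the exterior-versus-nested verification that determines which plane plays the role of $P_1$ in each regime; getting these two bookkeeping points right is essential because swapping $P_1$ with $P_2$ would invalidate the identification of $(r_1,r_2,d)$.
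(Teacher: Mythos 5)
Your proposal is correct and follows essentially the same route as the paper: reduce to planar circle inversions of the two coordinate-plane cross-sections of $T_R$, compute centres and radii by inverting the intersection points with the $x$-axis (the paper's 1-D inversions $i(x,\varrho)=\varrho+1/(x-\varrho)$), and in case (ii) recognize that the $P_1$ cross-section of $\mathfrak{C}$ comes from the $x$-$y$ (i.e.\ $P_2$) cross-section of $T_R$. The only difference is that you spell out the exterior-versus-nested check identifying $P_1$ in each regime, which the paper asserts without elaboration.
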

This result is equivalent to \cite[Lemma 2.3]{YuChen:UniquenessAMS}, except that the $P_1$-ratios are presented differently. We recall its proof here with the help of a graphical illustration.
The key point is that, thanks to Theorem~\ref{thm:Thm1}, we only need to consider sphere inversions centered at the $x$-axis. This greatly simplifies the computation of the $P_1$-ratio of $\mathfrak{C}$.

\begin{proof}
In case $(i)$ of the lemma, the $P_1$ symmetry plane of both $\mathfrak{C}$ and $T_R$ is the $x$-$z$ plane,
so the $P_1$ cross-section of $\mathfrak{C}$ is the circle inversion of the $P_1$ cross-section of $T_R$ about the unit circle centered at $(\varrho,0)$ on the $x$-$z$ plane, see the first two panels of \cref{fig:DualCircles}.
In case $(ii)$, the $P_1$ symmetry plane
of $\mathfrak{C}$ is the $x$-$y$ plane, which is the $P_2$ symmetry plane of $T_R$, so
the $P_1$ cross-section of $\mathfrak{C}$ is the circle inversion of the $P_2$ cross-section of $T_R$ about the unit circle centered at $(\varrho,0)$ on the $x$-$y$ plane, as in the next two panels of \cref{fig:DualCircles}.

Furthermore, the circle pairs in these cross-sections all share the same symmetry line (the middle black lines in \cref{fig:DualCircles}), so
their radii and centers
can be calculated easily by 1-D inversions, defined by $i(x,\varrho) \coloneqq \varrho + 1/(x-\varrho)$.
\begin{itemize}
\item[(i)] When $\varrho \in [0,R-1)$,
$r_1 = i(R-1,\varrho)-i(R+1,\varrho))/2$, $r_2 = i(-(R+1),\varrho)-i(-(R-1),\varrho))/2$,
$d=1/2(i(R-1,\varrho)+i(R+1,\varrho)) - 1/2(i(-(R+1),\varrho)+i(-(R-1),\varrho))$.
\item[(ii)] When $\varrho \in (R-1,\sqrt{R^2-1}]$,
$r_1 = i(-(R-1),\varrho)-i(R-1,\varrho))/2$, $r_2 = i(R+1,\varrho)-i(-(R+1),\varrho))/2$,
$d=1/2(i(R+1,\varrho)+i(-(R+1),\varrho)) - 1/2(i(-(R-1),\varrho)+i(R-1,\varrho))$.
\end{itemize}
The $P_1$ ratios then follow by routine calculations.
\end{proof}

\begin{figure}[ht]
\centerline{
\begin{tabular}{cc}
\includegraphics[height=1in]{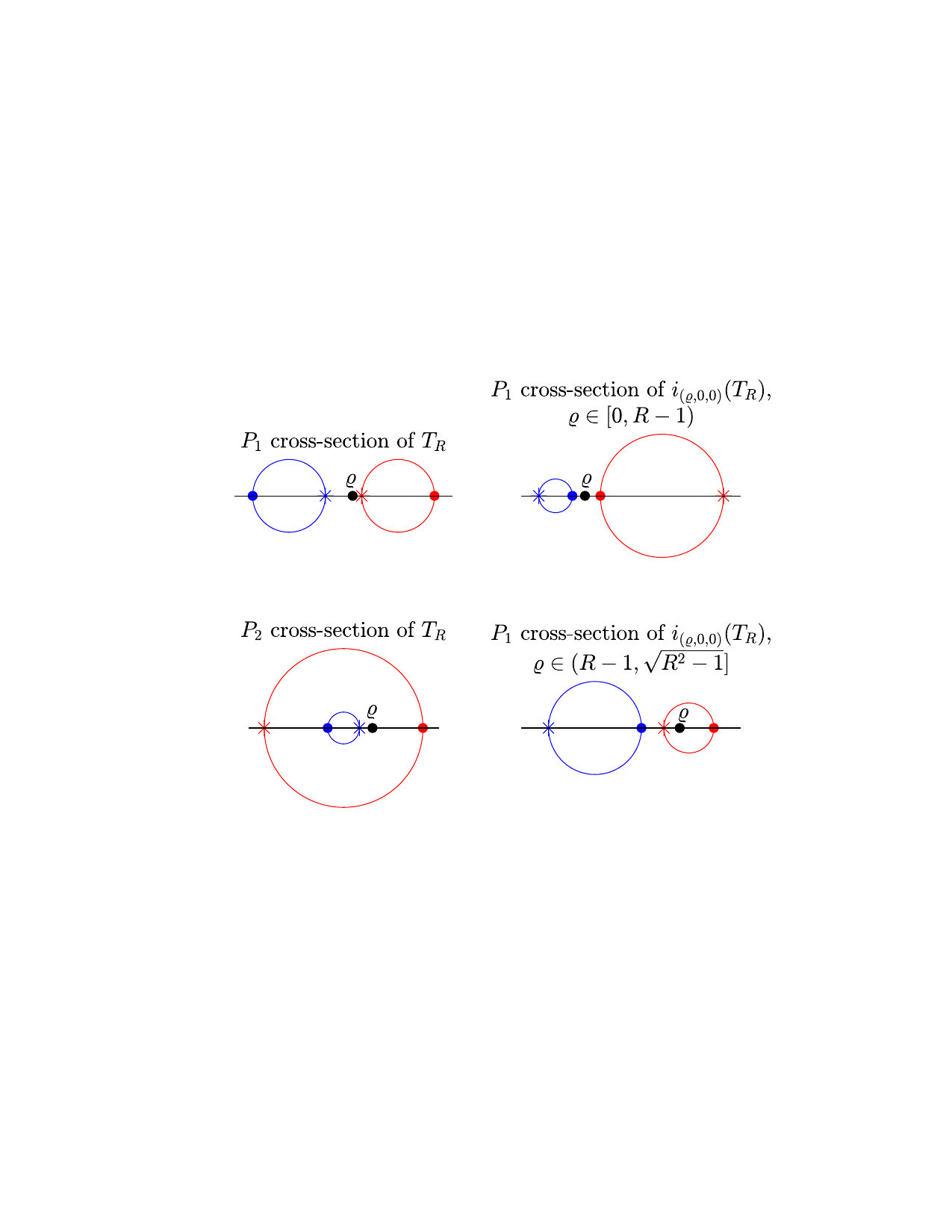} & \hspace{-.2cm}\includegraphics[height=1.05in]{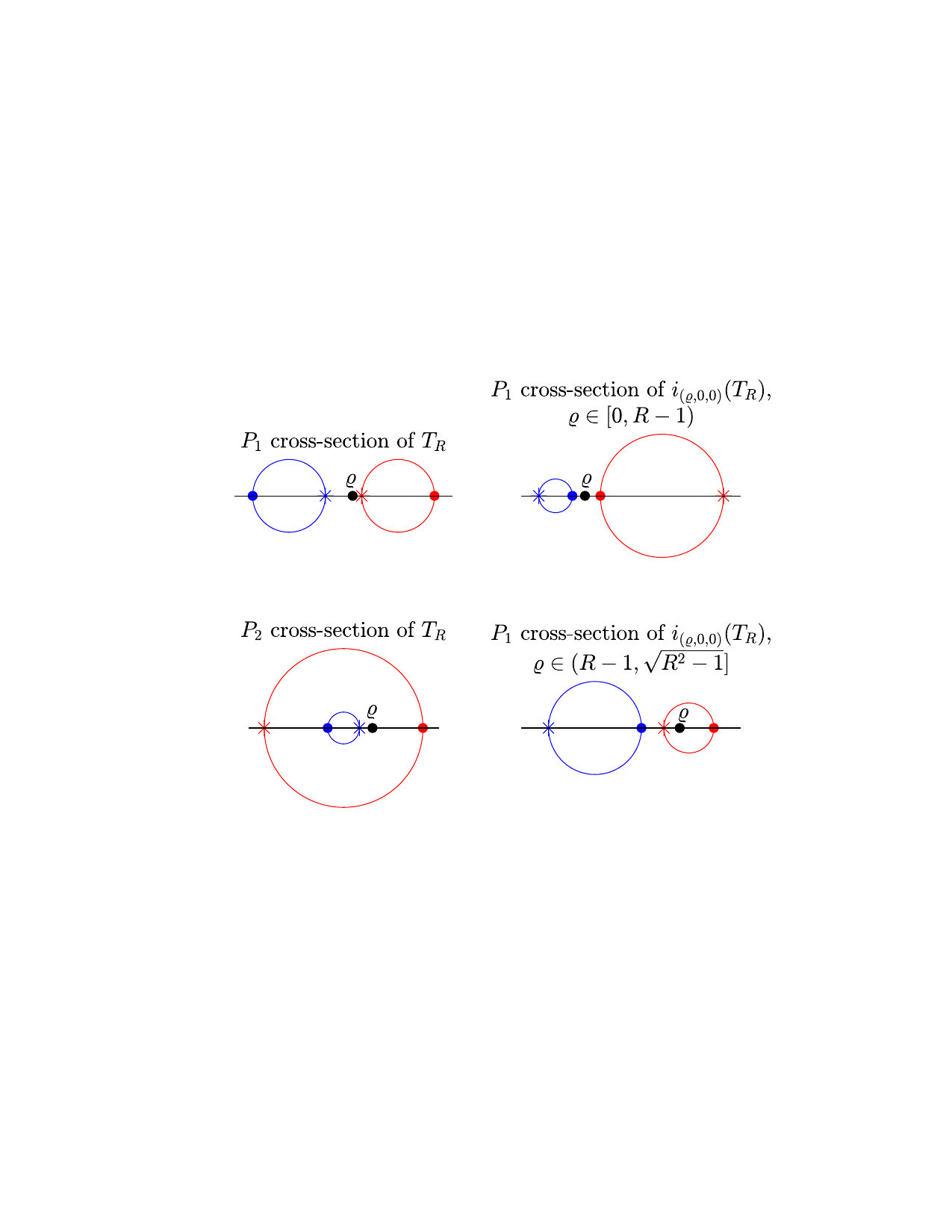}
\end{tabular}}
\caption{The shape space of 3-D torodial Dupin cyclides reduces to 2-D (circle) inversions of circle pairs (which are
 cross-sections of the cyclides), which further reduces to
 calculations involving 1-D inversions.}
\label{fig:DualCircles}
\end{figure}

\begin{proof}[Proof of \cref{thm:Thm2}]
The proof of \cref{thm:Thm2} follows by checking that the ratio in \eqref{eq:ratioInside} with $R$ replaced by $R/\sqrt{R^2-1}$
and $\varrho$ replaced by $\frac{1}{\sqrt{R^2-1}} \frac{\sqrt{R^2-1}-\varrho}{\sqrt{R^2-1}+\varrho}$
is the same as the ratio in \eqref{eq:ratioOutside}.
\end{proof}


\begin{proof}[Proof of \cref{thm:Thm3}] The second claim in the theorem follows from the first claim and Theorem~\ref{thm:Thm1}.
The first claim is equivalent to saying that the $P_1$-ratio in \eqref{eq:ratioOutside} is distinct for distinct $(R,\varrho)$ in
$\mathfrak{T}=\{(R,\varrho): R \in (1,\infty), \varrho \in [0,R-1)\}$, which is equivalent to having distinct Maxwell ratio \eqref{eq:Maxwellratio}.
The latter can be expressed as
\begin{align*}
a:f:(L-a) = d:(r_1-r_2):(r_1+r_2) &= 1: \frac{(R+\varrho)^2-(R-\varrho)^2}{2R(R^2-\varrho^2-1)} : \frac{(R+\varrho)^2+(R-\varrho)^2}{2R(R^2-\varrho^2-1)} \\
&= 1: \frac{2\varrho}{R^2-\varrho^2-1} : \frac{R^2+\varrho^2-1}{R(R^2-\varrho^2-1)}.
\end{align*}
The claim then follows because the map
$(R,\varrho) \stackrel{\phi}{\mapsto} \left(\frac{2\varrho}{R^2-\varrho^2-1}, \frac{R^2+\varrho^2-1}{R(R^2-\varrho^2-1)}\right)$ is injective on  $\mathfrak{T}$.
It is easy to check that
$\phi(R,\varrho)
\in \mathfrak{T}':=\{(a,b): a \in [0,1), b \in (a,1)\}$ for $(R,\varrho) \in \mathfrak{T}$.
One can then see that $\phi: \mathfrak{T} \rightarrow \mathfrak{T}'$ is bijective by verifying
that
$\phi^{-1}(a,b) =
\big( \sqrt{1-a^2}, (b\sqrt{1-a^2}-1)/a \big)/\sqrt{b^2-a^2}
$.
\end{proof}

\section{Hypergeometric representations and proof of Theorem~\ref{thm:Main}}
To prove Theorem~\ref{thm:Main}, we first show that the surface area and enclosing volume of
$i_{[\varrho,0,0]}(T_R)$, for
$\varrho \in [0,R-1)$, can be expressed as integrals of trigonometric functions.
Write
\[
\mathbf{x}(u,v,r) \coloneqq \left[ \big(R + r\sin(v) \big)\cos(u), \;
\big( R + r\sin(v) \big)\sin(u), \;
r\cos(v) \right].
\]
Since the conformal factor of $i_{\bf a}$ at ${\bf x} \in \bR^3\backslash\{\mathbf{a}\}$ is
$$
\lambda^2(\mathbf{a},\mathbf{x}) = \frac{1}{(\langle \mathbf{a},\mathbf{a} \rangle -2\langle \mathbf{a}, \mathbf{x}\rangle + \langle \mathbf{x}, \mathbf{x}\rangle)^2},
$$ i.e. $\langle d i_\mathbf{a}|_\mathbf{x} v, d i_\mathbf{a}|_\mathbf{x} w \rangle = \lambda^2(\mathbf{a},\mathbf{x}) \langle v,w \rangle$,
the area and enclosing volume of $i_{[\varrho,0,0]}(T_{R})$
are given by
\begin{eqnarray} \label{eq:AreaVolume}
A_R(\varrho) =
\int_0^{2\pi} \int_0^{2\pi} \frac{\dd {\rm Area}(u,v)}{Q(\varrho; \mathbf{x}(u,v,1))^2}, \;\;
\quad
V_R(\varrho) =
\int_0^1 \int_0^{2\pi} \int_0^{2\pi} \frac{\dd {\rm Vol}(u,v,r)}{Q(\varrho; \mathbf{x}(u,v,r))^3},
\end{eqnarray}
where
\begin{equation*}
Q(\varrho; \mathbf{x}) := \frac{1}{\lambda([\varrho,0,0],{\bf x})} =  \varrho^2 - 2 \mathbf{x}_1 \varrho  + \|\mathbf{x}\|^2,
\end{equation*}
$$
\dd{\rm Area}(u,v) = \| \mathbf{x}_u(u,v,1) \times \mathbf{x}_v(u,v,1) \| \, \dd u\, \dd v =  (R+\sin(v)) \, \dd u\, \dd v, 
$$
$$
\dd{\rm Vol}(u,v,r) = \big| \det[\mathbf{x}_u, \mathbf{x}_v, \mathbf{x}_r] \big| \, \dd u\, \dd v\, \dd r = r(R + r\sin(v)) \, \dd u\, \dd v\, \dd r.
$$
Notice also that
$$\langle \mathbf{x}, \mathbf{x} \rangle  = \| \mathbf{x} \|^2
= R^2 + r^2 + 2 R r \sin(v).$$
It can be shown that $A_R$ and $V_R$ (as in \cite[\S 4.2]{YuChen:UniquenessAMS})
extend to holomorphic functions
on the disk
$$
D := \big\{z \in \mathbb C: |z| < R-1 \big\}.
$$

Theorem~\ref{thm:Main} is equivalent to showing that the
isoperimetric ratio of $i_{[\varrho,0,0]}(T_R)$, which equals to
\begin{equation*} \label{eq:IsoR}
6 \sqrt{\pi} \cdot \frac{V_R(\varrho)}{A_R(\varrho)^{3/2}} \eqqcolon \Iso_R(\varrho),
\end{equation*}
defines a monotonically increasing function in $\varrho$ from $[0,R-1)$ to $[3/\sqrt{4 \pi R}, 1)$.
This will be proven in \cref{sec:IsoR}, after we establish the hypergeometric expressions for $A_R(\varrho)$ and $V_R(\varrho)$ in \cref{sec:hypergeom}.

\subsection{Hypergeometric representation of $A_R$ and $V_R$} \label{sec:hypergeom}
The following section is analogous to a part of the PhD thesis of the third author \cite[\S4.3]{Yurkevich23}, however the defining integrals in (\ref{eq:defAR}) and (\ref{eq:defVR}) are slightly different from \cite[(4.8) \& (4.9)]{Yurkevich23}. Practically this almost only results in the substitution $z \mapsto z (R^2-1)^2$, however for sake of completeness we redo all proofs in the new setting. For more details than provided here as well as more explanations on how the expressions were actually \emph{found} (not only proven) we refer to~\cite[\S4.3]{Yurkevich23}.

Let us first write out the integrals for $A_R$ and $V_R$ explicitly:
\begin{align}
A_R(z) &=
\int_0^{2\pi} \int_0^{2\pi} \frac{(R+\sin(v)) \, \dd u\, \dd v}{(z^2 - 2 (R+\sin(v))\cos(u) z + R^2+1+2R\sin(v) )^2}, \label{eq:defAR} \\
V_R(z) &=
\int_0^1 \int_0^{2\pi} \int_0^{2\pi} \frac{r(R+r\sin(v)) \, \dd u\, \dd v\, \dd r}{(z^2 - 2z(R+r\sin(v))\cos(u) + R^2+r^2+2Rr\sin(v))^3}. \label{eq:defVR}
\end{align}

\begin{proposition} \label{prop:AR}
    For $R>1$ and $z \in [0,R-1)$ it holds that
    \begin{align} \label{eq:Aid}
        A_R(z) & = \frac{4 \pi^2 R \left((R^{2}-1)^2-z^4\right)}{\left((R-1)^2 -z^2 \right)^{2} \left((R+1)^2 -z^2 \right)^{2}} \cdot \pFq{2}{1}{-\frac12,-\frac12}{1}{\frac{4 z^2}{\left(R^{2}-1-z^2\right)^{2}}} \\
        & \qquad = \frac{4 \pi^{2} R }{(R^2-1)^2}+\frac{4 \pi^{2} R \left(4 R^{2}+5  \right) }{(R^2-1)^4}z^{2} + \frac{9 \pi^{2} R \left( 4R^4 + 16 R^2 + 5 \right) }{(R^2-1)^6}z^{4} + \cdots. \nonumber
    \end{align}
\end{proposition}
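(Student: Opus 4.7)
The plan is threefold: (i) integrate out $u$ in \eqref{eq:defAR} explicitly; (ii) apply creative telescoping in $z$ to obtain a polynomial-coefficient linear ODE annihilating $A_R(z)$; (iii) verify that the right-hand side of \eqref{eq:Aid} satisfies the same ODE, and then match initial conditions. For step (i), write the denominator in \eqref{eq:defAR} as $(A - B\cos u)^2$ with $A = z^2+R^2+1+2R\sin v$ and $B = 2z(R+\sin v)$, and apply the classical formula $\int_0^{2\pi}(A-B\cos u)^{-2}\,\dd u = 2\pi A/(A^2-B^2)^{3/2}$, valid when $A>|B|$. The positivity $A^2 > B^2$ follows from the factorization $A^2 - B^2 = \bigl((z-R)^2+1-2(z-R)\sin v\bigr)\bigl((z+R)^2+1+2(z+R)\sin v\bigr)$, each factor being a non-vanishing sum of squares for $|z|<R-1$. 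This reduces $A_R(z)$ to a single integral in $v$ whose integrand is algebraic in $(\sin v, z)$, involving a $3/2$-power of a quadratic in $\sin v$.

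For step (ii), the Weierstrass substitution $t = \tan(v/2)$ expresses the integrand as an explicit algebraic function of $(t,z)$, to which creative telescoping for algebraic integrands (via Chyzak's algorithm, or the Bostan--Lairez--Salvy approach) applies, producing a second-order linear differential operator $\mathcal{L}\in\mathbb{Q}(R,z)[\partial_z]$ annihilating $A_R(z)$. For step (iii), the Gauss hypergeometric function $\pFq{2}{1}{-\frac12,-\frac12}{1}{x}$ is annihilated by $x(1-x)\partial_x^2+(1-2x)\partial_x+1/4$. Using closure properties of D-finite functions, the pullback $x(z)=4z^2/(R^2-1-z^2)^2$ together with multiplication by the rational prefactor in \eqref{eq:Aid} yields a second-order polynomial-coefficient operator in $z$ annihilating the RHS of \eqref{eq:Aid}, which one checks agrees with $\mathcal{L}$ up to a left multiplier. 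Finally, match initial data: the joint substitution $u\mapsto u+\pi$, $z\mapsto -z$ leaves \eqref{eq:defAR} invariant, so $A_R$ is an even function of $z$, eliminating the odd local solution at $z=0$; and the direct evaluation $A_R(0) = 4\pi^2 R/(R^2-1)^2$, obtained from the classical formulas for $\int_0^{2\pi}\dd v/(a+b\sin v)^k$ with $k=1,2$, $a=R^2+1$, $b=2R$, matches the RHS at $z=0$ and thus pins down the unique solution.

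The main obstacle is step (ii): creative telescoping for an algebraic integrand with a non-integer exponent sits at the boundary of the standard D-finite framework and tends to produce bulky intermediate operators. However, the expected shape of the target ODE is essentially readable off from \eqref{eq:Aid}, so in practice one produces a telescoping certificate for a candidate operator and verifies it \emph{a posteriori}. A minor prerequisite is to confirm that $0\leq 4z^2/(R^2-1-z^2)^2<1$ throughout $[0,R-1)$, which reduces to the one-line inequality $R^2-1-z^2>2z$ on that interval, ensuring that the ${}_2F_1$ series on the RHS converges and defines the principal branch on the disk $D$.
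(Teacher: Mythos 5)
Your overall strategy is the same as the paper's (produce, by creative telescoping, a linear differential operator annihilating $A_R(z)$, check that the right-hand side of \eqref{eq:Aid} satisfies the same equation, and finish by matching initial data), but your reduction differs: you integrate out $u$ in closed form via $\int_0^{2\pi}(A-B\cos u)^{-2}\,\dd u = 2\pi A/(A^2-B^2)^{3/2}$, leaving a one-dimensional integral of an algebraic function, whereas the paper performs a Weierstrass substitution in both variables and telescopes a rational function over the torus $|x|=|y|=1$. Your step (i) is correct, including the factorization $A^2-B^2=(A-B)(A+B)$ and its positivity on $[0,R-1)$. The trade-off is that the paper's rational double integral keeps the telescoping inside the best-understood setting, with the contour check reduced to verifying that the certificates' denominators do not vanish on $\gamma$; your algebraic $3/2$-power integrand is also D-finite, so telescoping applies, but note that after $t=\tan(v/2)$ the integration path is unbounded, so the identity $\mathcal{L}\cdot f=\partial_t C$ only yields $\mathcal{L}\cdot A_R=0$ after you verify that $C$ has no poles on the real $t$-line \emph{and} that its boundary contributions at $t=\pm\infty$ cancel. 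This is the analogue of the paper's pole check, and exactly the kind of ``irregular certificate'' issue that forces the paper into a detour for $V_R$; your proposal acknowledges bulkiness but not this specific verification.

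The genuine soft spot is the final uniqueness step. You claim that evenness of $A_R$ ``eliminates the odd local solution at $z=0$'' and that the single value $A_R(0)=4\pi^2R/(R^2-1)^2$ then pins down the solution. This presupposes a local structure of $\mathcal{L}$ at $z=0$ (one even analytic solution, one odd or non-analytic one) that you cannot know before computing the operator: for a second-order operator with exponents, say, $\{0,2\}$ at the origin, both local solutions can be even and analytic, and parity plus one value would not suffice; also note that the right-hand side of \eqref{eq:Aid} is itself even, so parity by itself distinguishes nothing. Either determine the indicial data of $\mathcal{L}$ at $0$ once it is computed (if the exponents are $\{0,0\}$, analyticity of $A_R$ on $D$ plus the value at $0$ does suffice), or do what the paper does: pass from $\mathcal{L}$ to the induced recurrence on Taylor coefficients, observe that its leading coefficient (there, $(n+10)^2$) never vanishes for $n\in\mathbb{N}$, so a power-series solution is determined by finitely many initial coefficients, and compare that many coefficients of the two sides. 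With that repair, and the certificate/boundary check above, your argument goes through.
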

\begin{proof}
    With a Weierstrass substitution we rewrite the integral representation of $A_R(z)$ as an integral of a rational function over the closed surface $\gamma = \{x,y \in \mathbb{C}: |x|=|y|=1\} \subseteq \mathbb{C}^2$:
    \[
        A_R(z) = \frac{1}{2 \pi^2 R} \int_{|x|=|y|=1} \frac{\left(2 R y -y^{2}-1\right) x \; \dd x \dd y}{\left(2 z^{2} y x -\left(2 R y -y^{2}-1\right) \left(x^{2}+1\right) z +2 x \left(R y -1\right) \left(R - y \right) \right)^{2}}.
    \]
    Let $a \in \mathbb{Q}(x,y,z,R)$ be this integrand. Koutschan's improved and implemented version~\cite{Koutschan10} of Chyzak's algorithm for creative telescoping~\cite{Chyzak00} finds a second-order differential operator $L_a \in \mathbb{Q}(z,R)\langle \partial_z \rangle$ and (explicit but huge) rational functions $C_{1},C_2 \in \mathbb{Q}(z,R,x,y)$, such that
    \[
        L_a \cdot a = \partial_x C_1 + \partial_y C_2.
    \]
    We check that the denominators of $C_{1},C_2$ are factors of
    \begin{align*}
        \mathrm{denom}(a) \cdot x \cdot y \cdot (1 + 2 R y - y^2) \cdot  H(z,R,r),
    \end{align*}
    for some polynomial $H(z,R,r) \in \mathbb{Q}[z,R,r]$, i.e. $C_1,C_2$ are well-defined on $\gamma \subseteq \mathbb{C}^2$. It follows that
    \[
        L_a \cdot A_R(z) = \frac{1}{2 \pi^2 R} \int_{|x|=|y|=1} L_a \cdot a \; \dd x \dd y = 0,
    \]
    because $L_a$ commutes with $\oint_\gamma \! \cdot \dd x \dd y$ and $\oint_\gamma \! \partial_x C_1 \dd x \dd y = \oint_\gamma \! \partial_y C_2 \dd x \dd y = 0$.

    Now that we have found a linear differential operator that provably annihilates $A_R(z)$ it is easy to conclude the proof.
    First, we observe using the defining differential equation of the Gaussian hypergeometric function and closure properties of linear ODEs implemented in any computer algebra software that $L_a$ also annihilates the right-hand side of (\ref{eq:Aid}). Moreover, we compute the linear recurrence satisfied by the coefficient sequence $(u_n(R))_{n \geq 0}$ of any power series solution of $L_a \cdot y = 0$:
    \[
        p_{10}(n,R) u_{n+10}(R) + \cdots + p_0(n,R) u_n(R) = 0, \quad \text{ for some } p_0(n,R),\dots,p_{10}(n,R) \in \mathbb{Z}[n,R],
    \]
    and observe that $p_{10}(n,R) = (n + 10)^2 \neq 0$ for $n \in \mathbb{N}$. It follows that any power series solution of $L_a \cdot y = 0$ is uniquely determined by its first $10$ coefficients. Finally, we check that the first $10$ coefficients of both sides of (\ref{eq:Aid}) are identical.
\end{proof}
\begin{proposition} \label{prop:VR}
    For $R>1$ and $z \in [0,R-1)$ it holds that
    \begin{align}\label{eq:Vid}
        V_R(z) & = \frac{2 R \,\pi^{2} \left(R^{2}-1-z^2\right)^{3} }{\left((R-1)^2 -z^2 \right)^{3} \left((R+1)^2 -z^2 \right)^{3}} \cdot \pFq{3}{2}{-\frac32,-\frac32,\frac{3}{2 R^{2}-4}+1}{1, \frac{3}{2 R^{2}-4}}{\frac{4 z^2}{\left(R^{2}-1-z^2\right)^{2}}}\\
        & \qquad = \frac{2 R \,\pi^{2}}{\left(R^2 -1\right)^{3}} +\frac{6 R \,\pi^{2} \left(3 R^{2}+2\right)}{\left(R^{2}-1\right)^{5}}z^2+\frac{3 R \,\pi^{2} \left(48 R^{4}+104 R^{2}+23\right) }{2 \left(R^{2}-1\right)^{7}}z^{4} + \cdots. \nonumber
    \end{align}
\end{proposition}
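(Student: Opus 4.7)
The proof of Proposition~\ref{prop:VR} proceeds in direct parallel to that of Proposition~\ref{prop:AR}, with the added complication of an outer real integration over $r\in[0,1]$. My plan is to first apply Weierstrass substitutions to both $u$ and $v$, turning the double trigonometric integral inside (\ref{eq:defVR}) into a rational-function integral over the torus $\gamma=\{|x|=|y|=1\}\subseteq\mathbb{C}^2$, while keeping $r$ as a real integration variable. Denoting the resulting rational integrand in the five variables $(x,y,r,z,R)$ by $b$, I would feed $b$ to Koutschan's implementation of Chyzak's creative telescoping algorithm and look for a linear differential operator $L_b\in\mathbb{Q}(z,R)\langle\partial_z\rangle$ together with rational certificates $C_1,C_2,C_3$ such that
\[
L_b\cdot b \;=\; \partial_x C_1 + \partial_y C_2 + \partial_r C_3.
\]

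As in the proof of Proposition~\ref{prop:AR}, I would then verify that the denominators of $C_1$ and $C_2$ introduce no singularities on $\gamma$, so that contour integration in $x,y$ kills the first two terms. The genuinely new ingredient is the $\partial_r C_3$ contribution: integrating it over $r\in[0,1]$ produces a boundary term $B(z,R):=\oint_\gamma\bigl(C_3|_{r=1}-C_3|_{r=0}\bigr)\,\dd x\,\dd y$, so what we obtain is the inhomogeneous equation $L_b\cdot V_R(z)=B(z,R)$. To homogenize, I would compute $B(z,R)$ explicitly and find an auxiliary operator $M\in\mathbb{Q}(z,R)\langle\partial_z\rangle$ that annihilates it; then $M L_b$ annihilates $V_R(z)$. (If $C_3$ vanishes at the endpoints, this step is trivial.)

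Once a linear ODE for $V_R(z)$ is in hand, I would verify, using closure properties of D-finite functions applied to the defining ODE of ${}_3F_2$, that the right-hand side of (\ref{eq:Vid}) is annihilated by the same operator; I expect the order to be at most three, consistent with the appearance of a ${}_3F_2$ rather than a ${}_2F_1$. Computing the associated P-recurrence for the Taylor coefficients of any power series solution and checking that its leading polynomial in $n$ has no non-negative integer roots, I conclude that such a power series solution is uniquely determined by finitely many initial coefficients in $z$. The proof then terminates by matching a sufficient number of initial Taylor coefficients of both sides of (\ref{eq:Vid}) — a finite and mechanical verification.

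The main obstacle I anticipate is twofold. First, the boundary term $B(z,R)$ from the non-cyclic $r$-integration did not arise for $A_R$ and, if nonzero, forces the extra homogenization step and inflates the order of the final operator. Second, a triple integral typically produces certificates $C_1,C_2,C_3$ of daunting size under creative telescoping; I may need to supply a tailored ansatz to Koutschan's package or, alternatively, first reduce $V_R$ to a double integral by antidifferentiating in $r$ symbolically (exploiting the fact that the denominator is a cubic polynomial in $r$) before invoking creative telescoping only in $(x,y)$. The unusual shape of the top/bottom parameters $\tfrac{3}{2R^2-4}+1$ and $\tfrac{3}{2R^2-4}$ of the ${}_3F_2$ in (\ref{eq:Vid}) — differing by exactly one — further suggests that the resulting third-order operator factors through a simpler second-order one, which is worth exploiting both for correctness checking and as a sanity check against the computer algebra output.
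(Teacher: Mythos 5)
Your overall verification scheme (creative telescoping, closure properties to show the right-hand side satisfies the same ODE, a P-recurrence whose leading coefficient has no nonnegative integer roots, and matching finitely many initial Taylor coefficients) is the same as the paper's, and that part is sound. The gap lies in your primary route: applying creative telescoping directly to the trivariate rational integrand of \eqref{eq:defVR}. This is precisely where the paper reports the obstruction: the algorithm produces $v=\partial_x C_1+\partial_y C_2+\partial_r C_3$ whose certificates have a common denominator that meets the integration domain $\gamma\times[0,1]$ (an ``irregular'' certificate). Hence your step ``verify that the denominators of $C_1$ and $C_2$ introduce no singularities on $\gamma$'' is exactly the check that fails in practice, and then neither the vanishing of $\oint_\gamma\partial_x C_1$ and $\oint_\gamma\partial_y C_2$ nor the evaluation of the $r$-boundary term $B(z,R)$ via the fundamental theorem of calculus is justified; the inhomogeneous equation $L_b\cdot V_R=B$ on which your homogenization step rests is therefore never established. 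The missing idea is the paper's circumvention: treat $r$ as a parameter, apply creative telescoping only in $(x,y)$ (where the certificate can be checked to be regular on $\gamma$) to prove the closed form \eqref{eq:VRr0} for the inner double integral $V_R^r(z)$ as a combination $Q_1\,{}_2F_1\!\left(\tfrac12,\tfrac12;2;Z\right)+Q_2\,{}_2F_1\!\left(\tfrac32,\tfrac32;3;Z\right)$, and then integrate this closed form over $r\in[0,1]$ by exhibiting an explicit hypergeometric antiderivative in $r$ and evaluating at the endpoints, which yields \eqref{eq:VRr} and hence \eqref{eq:Vid}.

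Your fallback --- antidifferentiating in $r$ first and only then telescoping in $(x,y)$ --- is in the right spirit (it is the paper's two-step scheme with the order of integration reversed), but as written it is a one-line aside rather than a proof step: the denominator is the cube of a quadratic in $r$ (not a cubic), so the $r$-antiderivative of the rational integrand generically contains arctangent or logarithmic terms, and the subsequent $(x,y)$-integration then leaves the purely rational setting you set up; you would still need to explain how to telescope that non-rational integrand and how the ${}_3F_2$ with the contiguous parameters $\tfrac{3}{2R^2-4}+1$ and $\tfrac{3}{2R^2-4}$ emerges. To close the gap, either make that fallback precise or adopt the paper's order of operations.
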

\begin{proof}
    The proof works analogously to the proof of \cref{prop:AR} with one difference: When creative telescoping is applied to the rational function $v \in \mathbb{Q}(x,y,z,r,R)$ whose triple integral defines $V_R(z)$, the algorithm finds some rational functions $C_{1},C_2,C_3 \in \mathbb{Q}(z,r,R,x,y)$ such that
    \[
        v = \partial_x C_1 + \partial_y C_2 + \partial_r C_3.
    \]
    However, in this case one cannot conclude that $V_R(z) = \int_0^1 \int_\gamma v \dd x \dd y \dd r = {C_3|}_{_{r=1}} - {C_3|}_{_{r=0}}$ because the integration contour intersects the common denominator of $C_1,C_2,C_3$, and therefore integration and differentiation cannot be exchanged. This issue is known as ``irregular'' certificate in the field of creative telescoping.

    We overcome this problem by first proving that
    \begin{align} \label{eq:VRr0}
        &V_R^r(z) \coloneqq \int_0^{2\pi} \int_0^{2\pi} \frac{r(R+r\sin(v)) \, \dd u\, \dd v\, \dd r}{(z^2 - 2z(R+r\sin(v))\cos(u) + R^2+r^2+2Rr\sin(v)))^3} = \nonumber \\
        & \hspace{4cm} Q_1(z,R,r) \pFq{2}{1}{1/2, 1/2}{2}{Z} + Q_2(z,R,r)\pFq{2}{1}{3/2, 3/2}{3}{Z},
    \end{align}
    for $Z = 4 r^{2} z^{2}/(\left(R -r +z \right) \left(R +r -z \right) \left(R +r +z \right) \left(R -r -z \right))$
    and explicit rational functions $Q_1,Q_2 \in \mathbb{Q}(z,R,r)$. Then one can find an explicit antiderivative of the right-hand side in terms of a hypergeometric function and simply evaluate it at $r=1$ and $r=0$:
    \begin{align} \label{eq:VRr}
        V_R(z) = \int_0^1 V_R^r(z) \dd r =
        \frac{2 r^2 R \,\pi^{2} \left(R^{2}-z^2-1\right)^{3} }{\left((R-r)^2 -z^2 \right)^{3} \left((R+r)^2 -z^2 \right)^{3}} \cdot \pFq{3}{2}{-\frac32,-\frac32,\frac{3 r^{2}}{2 R^{2}-4 r^{2}}+1}{1, \frac{3 r^{2}}{2 R^{2}-4 r^{2}}}{\frac{4 r^2 z^2}{\left(R^{2}-r^2-z^2\right)^{2}}} \Bigg|_0^1.
    \end{align}
    Once the expression in (\ref{eq:VRr}) is found, the verification of the proof is easy: Like in the proof of \cref{prop:AR} compute with creative telescoping the annihilator for $V_R^r(z)$ (check that its poles have no intersection with $\gamma$) and verify that the derivative of the right-hand side of (\ref{eq:VRr0}) satisfies the same differential equation. By uniqueness (after checking enough initial conditions) one concludes (\ref{eq:VRr0}) and consequently (\ref{eq:VRr}). We refer to \cite[\S4.3.2]{Yurkevich23} for the explanation on how the expression in (\ref{eq:VRr}) can be first guessed before it is proven.
\end{proof}

\subsection{$\Iso_R$ is increasing} \label{sec:IsoR}
Combining the definition \eqref{eq:IsoR} of $\Iso_R$ with the propositions in \cref{sec:hypergeom} we arrive at
\[
    \Iso^2_R(z) = 36 \pi \cdot \frac{V_R(z)^2}{A_R(z)^{3}} =
    \frac{9}{4 \pi R} \frac{\pFq{3}{2}{-\frac32,-\frac32,\frac{3}{2 R^{2}-4}+1}{1, \frac{3}{2 R^{2}-4}}{\frac{4 z^2}{\left(R^{2}-1-z^2\right)^{2}}}^2}{\pFq{2}{1}{-\frac12,-\frac12}{1}{\frac{4 z^2}{\left(R^{2}-1-z^2\right)^{2}}}^3}  \left(\frac{R^{2}-z^2-1}{R^{2}+z^2-1} \right)^3.
\]
Note that
\begin{align*}
\pFq{3}{2}{-\frac{3}{2},-\frac{3}{2},\frac{3}{2(R^2-2)}+1}{1,\frac{3}{2(R^2-2)}}{1} = \frac{16R^2}{3\pi} \quad \text{and} \quad
\pFq{2}{1}{-\frac{1}{2},-\frac{1}{2}}{1}{1} = \frac{4}{\pi},
\end{align*}
where the first identity is due to
\begin{align} \label{eq:3F2to2F1}
    \pFq{3}{2}{-\frac{3}{2},-\frac{3}{2},\frac{3}{2(R^2-2)}+1}{1,\frac{3}{2(R^2-2)}}{x} = \pFq{2}{1}{-\frac{3}{2},-\frac{3}{2}}{1}{x} + \frac{3}{2} \left( R^2 -2 \right)\cdot x \cdot \pFq{2}{1}{-\frac{1}{2},-\frac{1}{2}}{2}{x}.
\end{align}
We mention that this identity (\ref{eq:3F2to2F1}) also highlights the special role of $R = \sqrt{2}$ covered in \cite{YuChen:UniquenessAMS,BoYu22}, as in this case the second summand vanishes.

From the formulas above we immediately obtain that
\(
\lim_{z \to R-1} \Iso_R(z) = 1
\).
We are now in a position to finish the proof of Theorem~\ref{thm:Main}.
After the substitution
\[
x = \frac{4 z^2}{\left(R^{2}-1-z^2\right)^{2}},
\]
it is enough to show that the function
\[
h_R(x) \coloneqq \frac{\pFq{3}{2}{-\frac{3}{2},-\frac{3}{2},\frac{3}{2(R^2-2)}+1}{1,\frac{3}{2(R^2-2)}}{x}^2}{\pFq{2}{1}{-\frac{1}{2},-\frac{1}{2}}{1}{x}^3} \cdot (1 + (R^2-1)\cdot x)^{-3/2}
\]
is increasing on $x \in (0,1)$ for all $R>1$. Now define the two functions
\begin{align*}
    f(x) \coloneqq \frac{(x+1)^{1/2}}{\pFq{2}{1}{-\frac{1}{2},-\frac{1}{2}}{1}{x}} \quad \text{and} \quad g_R(x) \coloneqq \frac{\pFq{3}{2}{-\frac{3}{2},-\frac{3}{2},\frac{3}{2(R^2-2)}+1}{1,\frac{3}{2(R^2-2)}}{x}}{(1+x)^{3/4} \cdot (1+(R^2-1)\cdot x)^{3/4}}
\end{align*}
and note that clearly $h_R = f^3 \cdot g_R^2$. We will show that both $f$ and $g_R$ increase on $x \in (0,1)$ for any $R>1$. Since $f$ and $g_R$ are positive on this interval, this will be enough to conclude Theorem~\ref{thm:Main}.

The fact that $f(x)$ is increasing on $(0,1)$ is already proved in \cite[Proposition 1]{BoYu22}. For $g_R(x)$ we have to adapt the argument slightly, since the parameter $R$ indeed complicates matters. Let
\[
\frac{4 \cdot g_R'(x) \cdot (1+x)^{7/4} \cdot (1+(R^2-1)\cdot x)^{7/4}}{3 \cdot (1-x)^2 \cdot (R^2-1)} \eqqcolon \sum_{n\geq 0} u_n(R) x^n,
\]
for some rational functions $u_n(R)\in \mathbb{Q}(R)$, $n\geq 1$ and $u_0(R) = 1$. It turns out that the sequence $(u_n(R))_{n\geq0}$ is hypergeometric, and this will allow us to prove that $u_n(R) > 0$ for $R>1$. Clearly, this implies that $g_R'(x)>0$, i.e. $g_R(x)$ is increasing.
For all $n\geq0$ define the sequence of polynomials
\[
p_n(R) \coloneqq 4(R^4 + 4R^2 - 4)n^3 + 6(R^4 + R^2 - 2)n^2 + (2R^4 - 13R^2 + 10)n - 3R^2 + 3.
\]
Then it is not difficult to check with a computer that for $n\geq0$ we have
\[
\frac{u_{n+1}(R)}{u_n(R)} = \frac{(2n-1)(2n+1)\cdot p_{n+1}(R)}{4(n+2)(n+1) \cdot p_n(R)}.
\]
Hence, it holds that $u_{n}(R)>0$ for $R>1$ if we can prove that $p_n(R)>0$ for all $n \geq 1$. Observe that for $n\geq1$ and $R>1$:
\[
p_n(R) > 4 R^4 n^3 + (10-11R^2) n + 3 - 3R^2.
\]
The polynomial on the right-hand side is strictly increasing in $n$ for $n\geq1$ since the larger root of its derivative is $\sqrt{36R^2-33}/(6R^2)<1$. Therefore we can set $n=1$ and obtain
\[
p_n(R) > p_1(R) = (2R^2 - 14/4)^2 + 3/4 > 0.
\]
The proof of Theorem~\ref{thm:Main} is now completed. \eop

\bibliographystyle{abbrv}
{\small
\bibliography{main}
}
\end{document}